 \newtheorem{proposition}{Proposition}
 \newtheorem{theorem}{Theorem}
 \newtheorem{lemma}{Lemma}
\begin{document}
\title{The large sieve with square norm moduli in $\mathbb{Z}[i]$}

\author{Stephan Baier}
\address{Stephan Baier, Indian Institute of Science Education and Research Thiruvananthapuram, College of Engineering Trivandrum Campus,
Trivandrum - 695016, Kerala, India}

\email{sbaier@math.tifr.res.in}

\subjclass[2000]{11N35,11B57}

\maketitle

\section{Introduction}
The classical large sieve inequality asserts that 
\begin{equation*} 
\sum\limits_{q\le Q} \sum\limits_{\substack{a=1\\ (a,q)=1}}^{q} \left|\sum\limits_{n\le N} a_n \cdot e\left(n\cdot \frac{a}{q}\right)\right|^2 \ll \left(Q^2+N\right)\sum\limits_{n\le N} |a_n|^2
\end{equation*}
for any $Q,N\ge 1$ and any sequence $(a_n)_{n\in \mathbb{N}}$ of complex numbers. (Equivalently, the summation of $n$ over the interval $(0,N]$ can be replaced by a summation over any interval $(M,M+N]$.) The large sieve with square moduli was investigated by L. Zhao and the author of the present paper in a series of papers (see \cite{Baie}, \cite{BaZh}, \cite{Zhao}). To date, the best result is the following.
\begin{equation*} 
\sum\limits_{q\le Q} \sum\limits_{\substack{a=1\\ (a,q)=1}}^{q^2} \left|\sum\limits_{n\le N} a_n \cdot e\left(n\cdot \frac{a}{q^2}\right)\right|^2 \ll (QN)^{\varepsilon}\left(Q^3+\min\left\{Q^2\sqrt{N},\sqrt{Q}N\right\}+N\right)\sum\limits_{n\le n} |a_n|^2,
\end{equation*}
where $\varepsilon$ is any positive constant, and the implied $\ll$-constants depends only on $\varepsilon$. The object of this paper is to establish a large sieve inequality for square norm moduli in $\mathbb{Z}[i]$. 

M. Huxley \cite{Huxl} established a generalization of the large sieve for number fields, which we describe in the following. Let $K$ be an algebraic number field of degree $k$ over $\mathbb{Q}$ and let $(\theta_1,...,\theta_k)$ be an integral basis of $K$, so that every integer $\xi$ of $K$ is representable uniquely as 
$$
\xi=n_1\theta_1+...+n_k\theta_k,
$$
where $n_1,...,n_k$ are rational integers. For any integral ideal $\mathfrak{a}$ of $K$, let $\mathcal{N}(\mathfrak{a})$ be its norm and $\sigma(\xi)$ an additive character modulo $\mathfrak{a}$. Such a character is called proper if it is not an additive character modulo an ideal $\mathfrak{b}$ which divides $\mathfrak{a}$ properly. Then for any $X,N_1,...,N_k\ge 1$, $M_1,...,M_k\in \mathbb{R}$ and complex sequence $(b_n)_{n\in \mathbb{Z}^k}$, we have
\begin{equation} \label{huxley}
\begin{split}
& \sum\limits_{\mathcal{N}(\mathfrak{a})\le X^k} \sum\limits_{\substack{\sigma \bmod{\mathfrak{a}}\\ \sigma\ \mbox{\scriptsize proper}}} \left|\sum\limits_{M_1<n_1\le M_1+N_1}\cdots
\sum\limits_{M_k<n_k\le M_k+N_k} b_{n_1,...,n_k} \cdot \sigma(n_1\xi_1+\cdots+n_k\xi_k)\right|^2\\ 
\ll & \prod\limits_{j=1}^k \left(N_j^{1/2}+X\right)^2 \cdot 
\sum\limits_{M_1<n_i\le M_1+N_1}\cdots
\sum\limits_{M_k<n_k\le M_k+N_k} |b_{n_1,...,n_k}|^2,
\end{split}
\end{equation}
where the implied $\ll$-constant depends only on the field $K$. As demonstrated in \cite{Huxl}, using Gauss sums similarly as in the case $K=\mathbb{Q}$, the above can be converted into a large sieve inequality for multiplicative characters $\chi$ of the form
\begin{equation*} 
\begin{split}
& \sum\limits_{\mathcal{N}(\mathfrak{a})\le X^k} \frac{\mathcal{N}(\mathfrak{a})}{\Phi(\mathfrak{a})}\cdot \sum\limits_{\substack{\chi \bmod{\mathfrak{a}}\\ \chi\ \mbox{\scriptsize proper}}} \left|\sum\limits_{M_1<n_1\le M_1+N_1}\cdots
\sum\limits_{M_k<n_k\le M_k+N_k} b_{n_1,...,n_k} \cdot \chi(n_1\xi_1+\cdots+n_k\xi_k)\right|^2\\ 
\ll & \prod\limits_{j=1}^k \left(N_j^{1/2}+X\right)^2 
\sum\limits_{M_1<n_i\le M_1+N_1}\cdots
\sum\limits_{M_k<n_k\le M_k+N_k} |b_{n_1,...,n_k}|^2,
\end{split}
\end{equation*}
where $\Phi(\mathfrak{a})$ is the generalized Euler totient function for ideals in $K$, and, similarly as in the case of additive characters, $\chi$ is called proper if it is not a multiplicative character modulo an ideal $\mathfrak{b}$ which divides $\mathfrak{a}$ properly.
In $\mathbb{Z}[i]$, which is a principal ideal domain, the proper additive characters for the ideal $(q)$ take the form
$$
\sigma(\xi)=e\left(\mbox{Tr}\left(\frac{\xi r}{2q}\right)\right)=e\left(\Re\left(\frac{\xi r}{q}\right)\right),
$$
where $r\in \mathbb{Z}[i]$ ranges over a reduced residue system modulo $q$ (in particular, $r$ and $q$ are coprime). In the above, $\mbox{Tr}(x)$ denotes the trace of $x\in \mathbb{Z}[i]$, given by $\mbox{Tr}(x)=x+\overline{x}=2\Re x$. Hence, setting $K:=\mathbb{Q}[i]$, $k=2$, $\xi_1=1$, $\xi_2=i$, $X:=Q$, $N_j:=\sqrt{N}$ and $a_n:=b_{\Re n,\Im n}$ for $n\in \mathbb{Z}[i]$, where $b_{\Re n,\Im n}=0$ if $\mathcal{N}(n)>N$, we deduce the following version of the large sieve for $\mathbb{Z}[i]$ from \eqref{huxley}. 

\begin{theorem} \label{theo1} Let $Q,N\ge 1$ and $(a_n)_{n\in \mathbb{Z}[i]}$ be any sequence of complex numbers. Then 
$$
\sum\limits_{\substack{q\in \mathbb{Z}[i]\setminus\{0\}\\ \mathcal{N}(q)\le Q}} \sum\limits_{\substack{r \bmod{q}\\ (r,q)=1}} \left|\sum\limits_{\substack{n\in \mathbb{Z}[i]\\ \mathcal{N}(n)\le N}}  a_n \cdot e\left(\mbox{\rm Tr}\left(\frac{nr}{2q}\right)\right)\right|^2 \ll \left(Q^2+N\right)\sum\limits_{\substack{n\in \mathbb{Z}[i]\\ \mathcal{N}(n)\le N}} |a_n|^2.
$$
\end{theorem}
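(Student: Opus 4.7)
The plan is to deduce Theorem \ref{theo1} as a direct specialization of Huxley's inequality \eqref{huxley} to the field $K = \mathbb{Q}(i)$ of degree $k = 2$, with integral basis $\theta_1 = 1$, $\theta_2 = i$. Every non-zero ideal $\mathfrak{a}$ of $\mathbb{Z}[i]$ is principal, and its four generators form a single orbit under the group of units $\{\pm 1, \pm i\}$; hence, up to a harmless factor of $4$ absorbed into the implied constant, the outer sum over $\mathfrak{a}$ with $\mathcal{N}(\mathfrak{a}) \le Q$ in \eqref{huxley} matches the outer sum over $q \in \mathbb{Z}[i] \setminus \{0\}$ with $\mathcal{N}(q) \le Q$ in the theorem.

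For the inner sum I would invoke the description given in the discussion preceding the statement: the proper additive characters modulo $(q)$ are exactly the maps $\sigma(\xi) = e\bigl(\mathrm{Tr}(\xi r/(2q))\bigr)$ as $r$ runs through a reduced residue system modulo $q$. To recast the condition $\mathcal{N}(n) \le N$ in Huxley's rectangular framework, I enclose the disc $n_1^2 + n_2^2 \le N$ in the square $|n_1|, |n_2| \le \sqrt{N}$, taking $M_1 = M_2 = -\sqrt{N}$, $N_1 = N_2 = 2\sqrt{N}$, and extending the sequence by $b_{n_1, n_2} := a_{n_1 + n_2 i}$ when $n_1^2 + n_2^2 \le N$ and $b_{n_1, n_2} := 0$ otherwise; the added zeros contribute nothing to either side. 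Choosing $X = \sqrt{Q}$ so that $X^{k} = Q$ fits the outer range, Huxley's bound reduces to
\[
\prod_{j=1}^{2}\left(N_j^{1/2} + X\right)^2 = \bigl((2\sqrt{N})^{1/2} + \sqrt{Q}\bigr)^{4} \ll N + Q^2,
\]
the last step being the elementary estimate $(a + b)^4 \ll a^4 + b^4$ for $a, b \ge 0$. Substituting back into \eqref{huxley} yields the theorem.

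There is no genuine obstacle here; the argument is essentially bookkeeping that translates Huxley's general number-field inequality into the coordinates of $\mathbb{Z}[i]$. The only moments demanding any care are the accounting of the unit orbit when passing from ideals to generators, and the collapse of the four-fold product $(N^{1/4} + Q^{1/2})^4$ into the compact shape $N + Q^2$ that appears in the statement.
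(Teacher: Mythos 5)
Your argument is exactly the paper's own deduction: Theorem \ref{theo1} is obtained there by specializing Huxley's inequality \eqref{huxley} to $K=\mathbb{Q}(i)$, $k=2$, $\xi_1=1$, $\xi_2=i$, with the proper additive characters modulo $(q)$ parametrized by $e\left(\mathrm{Tr}(\xi r/(2q))\right)$ and the coefficients extended by zero outside the disc $\mathcal{N}(n)\le N$, just as you do (the paper takes $X=Q$ and reads the range as $\mathcal{N}(q)\le Q$ after renaming, while you take $X=\sqrt{Q}$ directly; this is only bookkeeping). Your extra care about the factor $4$ from the four unit multiples of each generator is correct and harmless, so the proposal is right and essentially identical in approach.
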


Here, as in the following, $\mathcal{N}(x)$ denotes the norm of $x\in \mathbb{Z}[i]$, given by $\mathcal{N}(x)=x\overline{x}=(\Re x)^2+(\Im x)^2$, and $r$ runs over a reduced residue system modulo $q$ in $\mathbb{Z}[i]$, $(r,q)=1$ indicating the coprimality of $r$ and $q$. 
A version of the large sieve for $\mathbb{Z}[i]$ with moduli confined to natural numbers was proved by W. Schlackow in \cite[Theorem 4.2.1]{Schl} and may be reformulated as follows. 

\begin{theorem} \label{theo3} Let $Q,N\ge 1$ and $(a_n)_{n\in \mathbb{Z}[i]}$ be any sequence of complex numbers. Then 
$$
\sum\limits_{\substack{q\in \mathbb{N}\\ q\le Q}} \sum\limits_{\substack{r \bmod{q}\\ (r,q)=1}} \left|\sum\limits_{\substack{n\in \mathbb{Z}[i]\\ \mathcal{N}(n)\le N}}  a_n \cdot e\left(\mbox{\rm Tr}\left(\frac{nr}{2q}\right)\right)\right|^2 \ll \left(Q^3+Q^2\sqrt{N}+N\right)\sum\limits_{\substack{n\in \mathbb{Z}[i]\\ \mathcal{N}(n)\le N}} |a_n|^2,
$$
where $r$ runs over a reduced residue system modulo $q$ in $\mathbb{Z}[i]$. 
\end{theorem}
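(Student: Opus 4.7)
The plan is to prove Theorem \ref{theo3} by a two-dimensional Fourier-analytic argument (a 2D analogue of the Gallagher--Montgomery--Vaughan large sieve), exploiting the rational nature of the moduli $q\in\mathbb{N}$ to obtain the improved $Q^3$ term over the $Q^4$ one gets from specializing Theorem \ref{theo1} to $\mathcal{N}(q)\le Q^2$. Using the identification $\mathbb{Z}[i]\simeq\mathbb{Z}^2$ via $n=n_1+in_2\mapsto(n_1,n_2)$, the character reduces to $e(\mathrm{Tr}(nr/(2q)))=e((n_1r_1-n_2r_2)/q)$ when $q\in\mathbb{N}$, so the claim is a large sieve for the set of Farey-type points $\xi_{q,r}:=(r_1/q,-r_2/q)\in\mathbb{T}^2$, indexed by $q\in\mathbb{N}$ with $q\le Q$ and $r=r_1+ir_2$ running through reduced residues modulo $q$ in $\mathbb{Z}[i]$, against coefficients supported on the disk $\mathcal{N}(n)\le N$ in $\mathbb{Z}^2$.

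By duality, I would reduce to bounding $\sum_{\mathcal{N}(n)\le N}\left|\sum_{q,r}b_{q,r}\,e((n_1r_1-n_2r_2)/q)\right|^2$. After opening the square and smoothing the disk indicator by a radial Schwartz majorant $\Psi$ at scale $\sqrt{N}$ whose Fourier transform has rapid decay, Poisson summation on $\mathbb{Z}^2$ replaces the $n$-sum by a bilinear form
$$N\sum_{(q_1,r^{(1)}),(q_2,r^{(2)})} b_{q_1,r^{(1)}}\overline{b_{q_2,r^{(2)}}}\,\hat\Psi\!\bigl(\sqrt{N}(\xi_{q_1,r^{(1)}}-\xi_{q_2,r^{(2)}})\bigr).$$
The diagonal contribution comes from $\xi_{q_1,r^{(1)}}=\xi_{q_2,r^{(2)}}$, which because of the coprimality $(r,q)_{\mathbb{Z}[i]}=1$ forces $(q_1,r^{(1)})=(q_2,r^{(2)})$; summing yields $N\sum|b_{q,r}|^2$ and accounts for the $N$ term.

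For the off-diagonal, the key geometric input is that each coordinate of $\gamma:=\xi_{q_1,r^{(1)}}-\xi_{q_2,r^{(2)}}$ lies in $\tfrac{1}{q_1q_2}\mathbb{Z}/\mathbb{Z}$ and so, when nonzero, is $\ge(q_1q_2)^{-1}$. I would split the off-diagonal pairs into (a) \emph{total separation}, where both coordinates of $\gamma$ are nonzero, and (b) \emph{partial collision}, where exactly one coordinate of $\gamma$ vanishes. The decay of $\hat\Psi$ combined with Cauchy--Schwarz and the count $\sum_{q\le Q}\phi_{\mathbb{Z}[i]}(q)\ll Q^3$ produces the $Q^3$ term from case (a), while case (b)---which is governed by one-dimensional Farey spacing---contributes the intermediate $Q^2\sqrt{N}$.

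The main obstacle will be case (b). A partial collision amounts to fixing one coordinate and summing over the other, and the pairs to be counted are organized by common Gaussian divisors of $r^{(1)}\pmod{q_1}$ and $r^{(2)}\pmod{q_2}$. Making this counting sharp enough to yield $Q^2\sqrt{N}$ (rather than a weaker bound such as $QN$) requires combining a 1D Montgomery--Vaughan estimate in the non-vanishing coordinate with M\"obius inversion over squarefree Gaussian divisors of $q_1q_2$, exploiting that the coprimality $(r,q)_{\mathbb{Z}[i]}=1$ is strictly stronger than coordinate-wise coprimality $(r_j,q)_{\mathbb{Z}}=1$ and hence suppresses the spurious $Q$ that a purely $\mathbb{Z}^2$-Parseval argument would introduce.
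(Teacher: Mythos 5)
Your reduction---duality, a smooth radial majorant, Poisson summation in $n$, the identification $e(\mathrm{Tr}(nr/(2q)))=e((n_1r_1-n_2r_2)/q)$ for $q\in\mathbb{N}$, and the observation that each coordinate of $\gamma=\xi_{q_1,r^{(1)}}-\xi_{q_2,r^{(2)}}$ is a rational with denominator dividing $q_1q_2$, with exact vanishing of \emph{both} coordinates forcing $(q_1,r^{(1)})=(q_2,r^{(2)})$ by the $\mathbb{Z}[i]$-coprimality---is sound, and it is essentially equivalent to the paper's reduction (which uses the Bombieri--Iwaniec double large sieve instead of duality) to counting points $(r_1^{(1)}/q_1,-r_2^{(1)}/q_1)$ near a fixed such point. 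The genuine gap is in your case (a). In the critical range $Q^2\ll N\ll Q^4$, pairs with both coordinates of $\gamma$ nonzero but of size $O(N^{-1/2})$ are not controlled by ``decay of $\hat\Psi$, Cauchy--Schwarz and $\sum_{q\le Q}\Phi(q)\ll Q^3$'': a priori every modulus $q_1\in[\sqrt N/Q,\,Q]$ may contribute one $r^{(1)}$ with both coordinates within $N^{-1/2}$ of the target, giving a count of order $Q$ at scale $N^{-1/2}$ and hence a contribution of order $QN$, which strictly exceeds $Q^3+Q^2\sqrt N+N$ throughout that range; as written your argument only recovers the trivial bound $Q^3+QN$ there. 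Moreover, the $Q^2\sqrt N$ term is \emph{not} produced by your case (b): the extremal configurations have both coordinates nonzero, namely $r^{(2)}/q_2$ lying within about $N^{-1/2}$ (coordinatewise) of a Gaussian fraction with small denominator $d\approx\sqrt N/Q$, in which case all $\approx Q/d\approx Q^2/\sqrt N$ multiples $q_1$ of $d$ yield such near-coincidences and saturate $Q^2\sqrt N$. So the heart of the theorem lies exactly where your sketch is weakest.

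What is missing is a quantitative equidistribution/counting input showing that, for fixed $(q_2,r^{(2)})$ with $(r^{(2)},q_2)=1$ in $\mathbb{Z}[i]$, the number of moduli $q_1\le Q$ for which $q_1r^{(2)}/q_2$ lies within $O(q_1/\sqrt N)$ of a point of $\mathbb{Z}[i]$ is small enough. The paper achieves this by a second application of Poisson summation, now in the modulus variable $u$, which converts the problem into counting solutions of the congruence $\alpha x_2-\beta y_2\equiv\gamma\bmod u_2$ with $|\alpha|,|\beta|\lesssim\sqrt N/Q$, and then the elementary bound $T_{\gamma}(U,V)\ll(1+V/d)(1+U/u_2')\ll 1+U+V+UV/Q$ with $d=(x_2,u_2)$, where the joint coprimality $(x_2+iy_2,u_2)=1$ forces $(d,y_2)=1$; this is what yields $Q^3+Q^2\sqrt N+N$ (the paper in fact reproves the statement only with $N\log 2Q$ in place of $N$, the clean form being quoted from Schlackow's thesis). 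Your proposed tools for case (b) (one-dimensional Montgomery--Vaughan plus M\"obius over Gaussian divisors) do not address the case (a) count, so the central step of your proposal is missing; if you supply an estimate of the above congruence-counting type (or an equivalent bound on how often $q_1r^{(2)}\bmod q_2$ falls in a small box), the rest of your outline can be made to work.
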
  

In this paper, we shall establish the following version of the large sieve with square norm moduli for $\mathbb{Z}[i]$. 

\begin{theorem} \label{theo5} Let $Q,N\ge 1$ and $(a_n)_{n\in \mathbb{Z}[i]}$ be any sequence of complex numbers. Then 
$$
\sum\limits_{\substack{q\in \mathbb{Z}[i]\setminus\{0\}\\ \mathcal{N}(q)\le Q^2\\ \mathcal{N}(q)=\Box}} \sum\limits_{\substack{r \bmod{q}\\ (r,q)=1}} \left|\sum\limits_{\substack{n\in \mathbb{Z}[i]\\ \mathcal{N}(n)\le N}}  a_n \cdot e\left(\mbox{\rm Tr}\left(\frac{nr}{2q}\right)\right)\right|^2 \ll (QN)^{\varepsilon}\left(Q^3+Q^2\sqrt{N}+\sqrt{Q}N\right)\sum\limits_{\substack{n\in \mathbb{Z}[i]\\ \mathcal{N}(n)\le N}} |a_n|^2,
$$
where $\varepsilon$ is any positive constant, and the implied $\ll$-constant depends only on $\varepsilon$. 
\end{theorem}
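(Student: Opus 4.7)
\noindent\emph{Plan.} We split the sum over moduli according to whether $q$ is associate to a rational integer (equivalently, $q$ is real or pure imaginary) or not. In the first case $q = um$ with $u$ a unit and $m \in \mathbb{N}$, $m \le Q$, and the unit is absorbed via $r/(um) = r\bar u/m$ (so that $r\bar u$ runs over a reduced residue system mod $m$ as $r$ does mod $q$); Schlackow's Theorem~\ref{theo3} then yields $(Q^3 + Q^2\sqrt{N} + N)\sum_n |a_n|^2$ for this contribution, which fits inside the target bound. So it suffices to handle the \emph{genuinely Gaussian} case, where $q$ has both nonzero real and nonzero imaginary parts.

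\noindent\emph{Genuinely Gaussian case.} For such $q$, the condition $\mathcal{N}(q) = \Box$ means that $(\Re q, \Im q, \sqrt{\mathcal{N}(q)})$ is a nontrivial Pythagorean triple. Accordingly, such $q$ admit a parametrization $q = um\gamma^2$ with a unit $u$, $m \in \mathbb{N}$, and $\gamma = s+ti \in \mathbb{Z}[i]$ with $s,t$ not both zero; the size condition $\mathcal{N}(q) \le Q^2$ becomes $m(s^2+t^2) \le Q$. We now adapt the Baier--Zhao template: by duality, it suffices to estimate
$$
S := \sum_{\mathcal{N}(n)\le N}\left| \sum_{q,r} b_{q,r}\, e\!\left(\mathrm{Tr}\!\left(\tfrac{nr}{2q}\right)\right)\right|^2,
$$
where the inner sum is restricted to genuinely Gaussian square-norm moduli. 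Expanding the square and applying Poisson summation in $n$ show that the resulting bilinear kernel $K(q_1,r_1;q_2,r_2) = \sum_{\mathcal{N}(n)\le N} e(\mathrm{Tr}(n(r_1/q_1 - r_2/q_2)/2))$ is essentially supported on pairs with small discrepancy $|r_1/q_1 - r_2/q_2| \ll N^{-1/2+\varepsilon}$. Such pairs correspond to integer relations $r_1 q_2 - r_2 q_1 = t$ with $t \in \mathbb{Z}[i]$ of norm $\ll Q^2 N^{-1+\varepsilon}$.

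\noindent\emph{Main obstacle.} The technical heart of the argument is the counting step: for each $t \in \mathbb{Z}[i]$ of small norm, bound the number of admissible quadruples $(q_1 = m_1\gamma_1^2,\, q_2 = m_2\gamma_2^2,\, r_1,\, r_2)$ with $\mathcal{N}(q_i) \le Q^2$, $(r_i, q_i) = 1$, and $r_1 q_2 - r_2 q_1 = t$. This is the $\mathbb{Z}[i]$-analog of the Baier--Zhao lemma on pairs $(q_1, q_2, a_1, a_2)$ with $a_1 q_2^2 - a_2 q_1^2$ fixed; its proof combines a divisor-type bound in $\mathbb{Z}[i]$ (to control representations of Gaussian integers as differences of products) with a careful handling of the quadratic-in-$\gamma_i$ constraint. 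It is precisely the Gaussian square structure of $q_i$ that produces a saving of a factor $\sqrt{Q}$ over the naive count—saving a factor $\sqrt{Q}$ that would otherwise multiply $N$—and hence yields the $\sqrt{Q}N$ term in the bound. Summing over $t$ and assembling the rational and genuinely Gaussian contributions gives the claimed estimate.
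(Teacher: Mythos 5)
Your preliminary reductions are fine: splitting off the moduli associate to rational integers and bounding them by Theorem \ref{theo3}, the parametrization $q=um\gamma^2$ of Gaussian integers with square norm (which is in fact the complete parametrization), and the duality/expansion step localizing to pairs with $r_1q_2-r_2q_1=t$ of small norm (modulo two small slips: the relevant quantity is the distance of $r_1/q_1-r_2/q_2$ to the nearest element of $\mathbb{Z}[i]$, and with $\mathcal{N}(q_i)\le Q^2$ one gets $\mathcal{N}(t)\ll Q^4N^{-1+\varepsilon}$, not $Q^2N^{-1+\varepsilon}$). But the proof stops exactly where the theorem begins. The entire content of the bound $\sqrt{Q}N$ lies in the counting step, and your treatment of it is a single assertion: that a $\mathbb{Z}[i]$-analogue of the Baier--Zhao quadruple-counting lemma exists, that its proof "combines a divisor-type bound with careful handling of the quadratic-in-$\gamma_i$ constraint", and that "it is precisely the Gaussian square structure that produces a saving of a factor $\sqrt{Q}$". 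No mechanism is given for how the constraint $q=um\gamma^2$ enters the count, no precise statement of the lemma (uniformity in $t$, treatment of the diagonal $t=0$, which must produce the $Q^3$ term), and no indication of why the claimed saving is of size $\sqrt{Q}$ rather than something weaker. In the rational case this counting is the genuinely hard part of \cite{BaZh}, it is not a formal transfer to $\mathbb{Z}[i]$, and it in fact yields a different bound ($\min\{Q^2\sqrt{N},\sqrt{Q}N\}$) than the one stated here, which is the analogue of Zhao's Weyl-shift result \cite{Zhao}; so appealing to it by name does not close the gap.

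For comparison, the paper's route makes the use of the square-norm structure completely explicit. After the Bombieri--Iwaniec double large sieve (Lemma \ref{dls}) and a rescaling/rotation/lattice-switch argument, the problem is reduced (Propositions \ref{lab} and \ref{Fou}) to counting moduli $q=u+vi\in\mathcal{S}$ for which the point $\binom{f((uk+vl)/\mathcal{N}(q_2))}{f((-vk+ul)/\mathcal{N}(q_2))}$ lies in a disk of radius $\asymp Q/\sqrt{N}$, together with the spacing Lemma \ref{spacinglemma} and the packing bound of Proposition \ref{ballcount}. The square-norm condition is then exploited by writing $q=(m+ni)^2$ via the Pythagorean parametrization, applying Cauchy--Schwarz and a two-dimensional Weyl shift $h_1=m_1+m_2$, $h_2=m_1-m_2$, $j_1=n_1+n_2$, $j_2=n_1-n_2$ which linearizes the quadratic phase, followed by Poisson summation in $\alpha,\beta$, the divisor bound in $\mathbb{Z}[i]$, and a second application of Proposition \ref{ballcount}; the range $Q>N^{1/2-\varepsilon}$ is handled separately by the individual-modulus bound \eqref{individual}. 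Some device of this kind (Weyl differencing, or a genuinely worked-out count of the quadruples with the quadratic constraint) is indispensable; without it your proposal establishes only the reductions, not the theorem.
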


Here, as in the following, $\mathcal{N}(q)=\Box$ indicates that $\mathcal{N}(q)$ is a perfect square. The following analogue for multiplicative characters can be deduced by the standard procedure using Gauss sums mentioned above. 

\begin{theorem} Let $Q,N\ge 1$ and $(a_n)_{n\in \mathbb{Z}[i]}$ be any sequence of complex numbers. Then 
$$
\sum\limits_{\substack{q\in \mathbb{Z}[i]\setminus\{0\}\\ \mathcal{N}(q)\le Q^2\\ \mathcal{N}(q)=\Box}} \frac{\mathcal{N}(q)}{\Phi(q)} \cdot \sum\limits_{\substack{\chi \bmod{(q)}\\ \chi \ \mbox{\scriptsize \rm proper}}} \left|\sum\limits_{\substack{n\in \mathbb{Z}[i]\\ \mathcal{N}(n)\le N}}  a_n\cdot  \chi(n)\right|^2 \ll (QN)^{\varepsilon}\left(Q^3+Q^2\sqrt{N}+\sqrt{Q}N\right)\sum\limits_{\substack{n\in \mathbb{Z}[i]\\ \mathcal{N}(n)\le N}} |a_n|^2,
$$
where $\varepsilon$ is any positive constant, and the implied $\ll$-constant depends only on $\varepsilon$.
\end{theorem}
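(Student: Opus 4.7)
The strategy is the standard Gauss sum reduction (as indicated by the author in the remark preceding the multiplicative statement), applied now over $\mathbb{Z}[i]$ and combined with Theorem \ref{theo5}. First I would set up the Gauss sum machinery for proper (primitive) multiplicative characters $\chi$ modulo an ideal $(q)$ in $\mathbb{Z}[i]$. Writing
$$\tau(\overline{\chi}) = \sum_{\substack{r \bmod q\\ (r,q)=1}} \overline{\chi}(r)\, e\!\left(\mathrm{Tr}\!\left(\tfrac{r}{2q}\right)\right),$$
one has $|\tau(\overline{\chi})|^2 = \mathcal{N}(q)$ for $\chi$ proper, together with the identity
$$\tau(\overline{\chi})\, \chi(n) = \sum_{\substack{r \bmod q\\ (r,q)=1}} \overline{\chi}(r)\, e\!\left(\mathrm{Tr}\!\left(\tfrac{nr}{2q}\right)\right),$$
valid for all $n \in \mathbb{Z}[i]$ (the usual vanishing of the twisted Gauss sum at non-coprime $n$ handles the case $(n,q)>1$ automatically). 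Both facts are inherited from the principal ideal structure of $\mathbb{Z}[i]$ and require no new input.

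Next, I multiply the identity by $a_n$, sum over $n$ with $\mathcal{N}(n) \le N$, and take absolute square. Abbreviating $S(r/q) := \sum_n a_n e(\mathrm{Tr}(nr/(2q)))$, this gives
$$\mathcal{N}(q)\left|\sum_n a_n \chi(n)\right|^2 = \left|\sum_{\substack{r \bmod q\\ (r,q)=1}} \overline{\chi}(r)\, S(r/q)\right|^2.$$
Summing over proper $\chi \bmod q$, extending by positivity to all characters modulo $q$, and invoking orthogonality of characters in the group $(\mathbb{Z}[i]/q)^\times$ (of order $\Phi(q)$) yields
$$\frac{\mathcal{N}(q)}{\Phi(q)} \sum_{\substack{\chi \bmod q\\ \chi \text{ proper}}} \left|\sum_n a_n \chi(n)\right|^2 \le \sum_{\substack{r \bmod q\\ (r,q)=1}} |S(r/q)|^2.$$

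Finally, I sum this inequality over all $q \in \mathbb{Z}[i]\setminus\{0\}$ with $\mathcal{N}(q) \le Q^2$ and $\mathcal{N}(q) = \Box$. The right-hand side is then precisely the quantity bounded by Theorem \ref{theo5}, which gives the factor $(QN)^\varepsilon (Q^3 + Q^2 \sqrt{N} + \sqrt{Q}\, N) \sum_n |a_n|^2$ on the nose. There is no genuine obstacle here: the argument is entirely routine once Theorem \ref{theo5} is in hand, and the only mild care required is verifying that the orthogonality relations and Gauss sum evaluations for primitive characters work in $\mathbb{Z}[i]$ exactly as in $\mathbb{Z}$, which is standard since $\mathbb{Z}[i]$ is a principal ideal domain.
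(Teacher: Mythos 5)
Your proposal is correct and follows exactly the route the paper intends: the author deduces this theorem from Theorem \ref{theo5} by the ``standard procedure using Gauss sums'' of Huxley, which is precisely your reduction via $|\tau(\overline{\chi})|^2=\mathcal{N}(q)$ for proper $\chi$, orthogonality of characters modulo $q$, and summation over square norm moduli.
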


We start by considering sums over restricted sets of moduli of the form
\begin{equation} \label{Sigmadef}
\Sigma(Q,N;\mathcal{S}):=\sum\limits_{\substack{q\in \mathcal{S}\\ Q/2<\mathcal{N}(q)\le Q}} \sum\limits_{\substack{r \bmod{q}\\ (r,q)=1}} \left| \sum\limits_{\substack{n\in \mathbb{Z}[i]\\ \mathcal{N}(n)\le N}}a_{n}\cdot e\left(\mbox{Tr}\left(\frac{nr}{2q}\right)\right)\right|^2,
\end{equation}
where $\mathcal{S}$ is a subset of $\mathbb{Z}[i]$. The restriction to moduli norms in dyadic intervals will be of importance in our method.
To estimate the above sums, we first use the double large sieve due to H. Iwaniec and E. Bombieri \cite{IwBo}. This will lead us to a lattice point counting problem, which we reformulate as counting certain points in a disk. Considerations about the spacing of these points and the Poisson summation formula will enable us to recover slightly weakened versions of  Theorems \ref{theo1} and Theorem \ref{theo3}. This weakening by a factor of logarithm comes from our restriction to dyadic intervals above. The main point of this paper is to prove Theorem \ref{theo5}, for which we shall, in addition to the above-mentioned spacing results and the Poisson summation formula, use a 2-dimensional Weyl shift similar to the 1-dimensional Weyl shift performed in \cite{Zhao}. 

We note that Theorem \ref{theo5} is the precise analogue to the first known large sieve inequality for square moduli in $\mathbb{Z}$, proved by L. Zhao  \cite{Zhao}, which asserts that 
\begin{equation*} 
\sum\limits_{q\le Q} \sum\limits_{\substack{a=1\\ (a,q)=1}}^{q^2} \left|\sum\limits_{n\le N} a_n \cdot e\left(n\cdot \frac{a}{q^2}\right)\right|^2 \ll (QN)^{\varepsilon}\left(Q^3+Q^2\sqrt{N}+\sqrt{Q}N\right)\sum\limits_{n\le N} |a_n|^2.
\end{equation*}

Throughout this paper, we follow the usual convention that $\varepsilon$ is an arbitrarily small positive number that can change from line to line, and $O$-constants may depend on $\varepsilon$. \\

{\bf Acknowledgement.} The author thanks the IISER Thiruvananthapuram for financial support and excellent working conditions. 

\section{Initial transformations}
In this section, we start with some initial transformations of the sum $\Sigma(Q,N;\mathcal{S})$ defined in \eqref{Sigmadef}. Setting 
$$
q=u+vi,\quad  r=x+yi, \quad n=s+ti,
$$
a notation which we shall use throughout the sequel, we deduce that  
\begin{equation} \label{1}
\begin{split}
\Sigma(Q,N;\mathcal{S}) = & \sum\limits_{\substack{q\in \mathcal{S}\\ Q/2<\mathcal{N}(q)\le Q}} \sum\limits_{\substack{r \bmod{q}\\ (r,q)=1}} \left|\sum\limits_{\substack{n\in \mathbb{Z}[i]\\ \mathcal{N}(n)\le N}}
a_{n}\cdot e\left(\frac{(sx-ty)u+(sy+tx)v}{\mathcal{N}(q)}\right)\right|^2\\
= &  \sum\limits_{\substack{q\in \mathcal{S}\\ Q/2<\mathcal{N}(q)\le Q}} \sum\limits_{\substack{r \bmod{q}\\ (r,q)=1}} \left|\sum\limits_{\substack{n\in \mathbb{Z}[i]\\ \mathcal{N}(n)\le N}} a_n\cdot e\left(\left(\frac{xu+yv}{\mathcal{N}(q)},\frac{xv-yu}{\mathcal{N}(q)}\right)\cdot (s,t)\right)\right|^2\\
= & \sum\limits_{\substack{q\in \mathcal{S}\\ Q/2<\mathcal{N}(q)\le Q}} \sum\limits_{\substack{r \bmod{q}\\ (r,q)=1}} \left|\sum\limits_{|s|\le \sqrt{N}}\sum\limits_{|t|\le \sqrt{N}} a_{s,t}'\cdot e\left(\left(\frac{xu+yv}{\mathcal{N}(q)},\frac{xv-yu}{\mathcal{N}(q)}\right)\cdot (s,t)\right)\right|^2,
\end{split}
\end{equation}
where 
\begin{equation} \label{ast'}
a_{s,t}':=
\begin{cases}
a_{s+it} & \mbox{ if } \mathcal{N}(s+it)\le N, \\
0 & \mbox{ if } \mathcal{N}(s+it)>N.  
\end{cases}
\end{equation}
We set 
$$
\mathcal{T}(q,r):=\sum\limits_{|s|\le \sqrt{N}}\sum\limits_{|t|\le \sqrt{N}} a_{s,t}'\cdot e\left(\left(\frac{xu+yv}{\mathcal{N}(q)},\frac{xv-yu}{\mathcal{N}(q)}\right)\cdot (s,t)\right)
$$
and 
$$
b_{q,r}:= \begin{cases} 
\left|\mathcal{T}(q,r)\right|^2/\mathcal{T}(q,r) & \mbox{ if } \mathcal{T}(q,r)\not=0, \\ 0 & \mbox{ otherwise.}
\end{cases}
$$
Then it follows that
\begin{equation} \label{from}
\Sigma(Q,N;\mathcal{S})= \sum\limits_{\substack{q\in \mathcal{S}\\ Q/2<\mathcal{N}(q)\le Q}} \sum\limits_{\substack{r \bmod{q}\\ (r,q)=1}} \sum\limits_{|s|\le \sqrt{N}} \sum\limits_{|t|\le \sqrt{N}} a_{s,t}'b_{q,r} \cdot e\left(\left(\frac{xu+yv}{\mathcal{N}(q)},\frac{xv-yu}{\mathcal{N}(q)}\right)\cdot (s,t)\right)
\end{equation}
and also
\begin{equation} \label{hallo}
\Sigma(Q,N;\mathcal{S})=  \sum\limits_{\substack{q\in \mathcal{S}\\ Q/2<\mathcal{N}(q)\le Q}} \sum\limits_{\substack{r \bmod{q}\\ (r,q)=1}}\left| b_{q,r} \right|^2.
\end{equation}

\section{Application of the double large sieve}
Now we use the double large sieve due to Bombieri and Iwaniec \cite{IwBo} to further estimate $\Sigma(Q,N;\mathcal{S})$. 

\begin{lemma}[Bombieri-Iwaniec] \label{dls} Let ${\bf X}$ and ${\bf Y}$ be two subsets of $\mathbb{R}^K$. Let $a({\bf x})$ and $b({\bf y})$ be arbitrary complex numbers for ${\bf x}\in {\bf X}$ and ${\bf y} \in {\bf Y}$. Let $X_1,...,X_K,Y_1,...,Y_K$ be positive numbers. Define the bilinear forms
\begin{equation*}
\begin{split}
B(b;{\bf X}):=&\mathop{\sum\limits_{{\bf y}\in {\bf Y}} \sum\limits_{{\bf y}'\in {\bf Y}}}_{|{\bf y}-{\bf y}'|< (2X_k)^{-1};\ k=1,...,K} |b({\bf y})b({\bf y}')|,\\
B(a;{\bf Y}):=&\mathop{\sum\limits_{{\bf x}\in {\bf X}} \sum\limits_{{\bf x}'\in {\bf X}}}_{|{\bf x}-{\bf x}'|< (2Y_k)^{-1};\ k=1,...,K} |a({\bf x})a({\bf x}')|,\\
B(a,b;{\bf X},{\bf Y}):= &\sum\limits_{\substack{{\bf x}\in {\bf X}\\ |x_k|< X_k}} \sum\limits_{\substack{{\bf y}\in {\bf Y}\\ |y_k|< Y_k}} a({\bf x})b({\bf y})e({\bf x}\cdot {\bf y}).
\end{split}
\end{equation*}
Then
$$
|B(a,b;{\bf X},{\bf Y})|^2\le \left(2\pi^2\right)^K \prod\limits_{k=1}^K \left(1+X_kY_k\right)B(b;{\bf X})B(a;{\bf Y}).
$$
\end{lemma}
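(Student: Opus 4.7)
The statement is the classical double large sieve of Bombieri and Iwaniec, whose proof is given in \cite{IwBo}; I would invoke it directly rather than reproduce it, but let me sketch the underlying idea. The argument is Fourier-analytic: one combines the Cauchy--Schwarz inequality with Selberg--Beurling type smoothing to replace the discrete box-restrictions by Plancherel-compatible majorants whose Fourier transforms have compact support.

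Concretely, first construct tensor-product majorants $\phi = \prod_{k=1}^K \phi_k$ and $\psi = \prod_{k=1}^K \psi_k$ on $\mathbb{R}^K$ with the following properties: each $\phi_k\ge 0$ satisfies $\phi_k(t)\ge 1$ on $[-X_k,X_k]$, has $\widehat{\phi_k}\ge 0$ supported in $[-1/(2Y_k),1/(2Y_k)]$, and total mass $\int\phi_k = \widehat{\phi_k}(0) = O(X_k+Y_k^{-1})$; the $\psi_k$ are analogous with the roles of $X_k,Y_k$ interchanged. Such majorants are obtained by rescaling the classical Fej\'er kernel (or equivalently the Selberg majorant of the interval $[-1/2,1/2]$). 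Writing $B(a,b;\mathbf{X},\mathbf{Y}) = \sum_{\mathbf{y}\in\mathbf{Y}}b(\mathbf{y})A(\mathbf{y})$ with $A(\mathbf{y}) = \sum_{\mathbf{x}\in\mathbf{X}}a(\mathbf{x})e(\mathbf{x}\cdot\mathbf{y})$, one expands $|B|^2$ into a quadruple sum over $(\mathbf{x},\mathbf{x}',\mathbf{y},\mathbf{y}')$, applies Cauchy--Schwarz in a way that pairs up the two sides symmetrically, and inserts $\phi$ and $\psi$ to majorize the characteristic functions of the boxes $|x_k|<X_k$ and $|y_k|<Y_k$. Plancherel's identity in each variable then converts the resulting expression into discrete sums over pairs, and the compact support of $\widehat{\phi_k}$, $\widehat{\psi_k}$ restricts those pairs to $|x_k-x_k'|<1/(2Y_k)$ and $|y_k-y_k'|<1/(2X_k)$ respectively; these are exactly the pairs counted by $B(a;\mathbf{Y})$ and $B(b;\mathbf{X})$. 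The normalizations $\int\phi \cdot \int\psi$ combine to yield the factor $\prod_k(1+X_kY_k)$, while the implicit constant $(2\pi^2)^K$ comes from the explicit Fourier constants in the Fej\'er-type construction.

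The main obstacle is in the organization: because $\mathbf{X}$ and $\mathbf{Y}$ are completely arbitrary subsets of $\mathbb{R}^K$, one cannot naively pass from the discrete sum $\sum_{\mathbf{y}\in\mathbf{Y}}|A(\mathbf{y})|^2$ to an integral (the set $\mathbf{Y}$ may cluster arbitrarily). The Bombieri--Iwaniec trick is to apply the smoothing and the Cauchy--Schwarz symmetrically on both sides so that the clustering of $\mathbf{X}$ is absorbed into $B(a;\mathbf{Y})$ and the clustering of $\mathbf{Y}$ into $B(b;\mathbf{X})$; no single-sided approach works. Since the lemma is standard, I would simply cite \cite{IwBo} rather than reproduce this bookkeeping in full.
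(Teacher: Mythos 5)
Your approach coincides with the paper's: the lemma is not proved there either, but simply quoted from Bombieri--Iwaniec \cite{IwBo}, so citing the original (with or without your sketch of the majorant argument) is exactly what the paper does. The one point the paper adds, which you should as well, is that the version stated here uses strict inequalities ($<$ in place of $\le$) both in the ranges $|x_k|<X_k$, $|y_k|<Y_k$ and in the spacing conditions, so it is not formally the statement of \cite{IwBo}; one must remark that the proof there applies unchanged to this variant.
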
  

We note that in the original statement of the above lemma, "$< (2X_k)^{-1}$", "$< (2Y_k)^{-1}$", "$|x_k|< X_k$" and "$|y_k|< Y_k$" were replaced by  "$\le (2X_k)^{-1}$", "$\le (2Y_k)^{-1}$", "$|x_k|\le X_k$" and "$|y_k|\le Y_k$", respectively, but the proof in \cite{IwBo} applies to the above variant as well. 

In the following let, for any real number $z$, $\{z\}=z-[z]$ be its fractional part, $||z||$ its distance to the nearest integer, and 
\begin{equation} \label{fdef}
f(z):=\left\{z+\frac{1}{2}\right\}-\frac{1}{2}. 
\end{equation}
Then noting that 
$$
e\left(\left(\frac{xu+yv}{\mathcal{N}(q)},\frac{xv-yu}{\mathcal{N}(q)}\right)\cdot (s,t)\right)=e\left(\left(f\left(\frac{xu+yv}{\mathcal{N}(q)}\right),f\left(\frac{xv-yu}{\mathcal{N}(q)}\right)\right)\cdot (s,t)\right)
$$
and using \eqref{ast'} and Lemma \ref{dls} with
\begin{equation*}
\begin{split}
{\bf X}:= & \left\{(s,t)\in \mathbb{Z}^2 \ : \ \mathcal{N}(s+it)\le N\right\},\\
{\bf Y}:= & \left\{\left(f\left(\frac{xu+yv}{\mathcal{N}(q)}\right),f\left(\frac{xv-yu}{\mathcal{N}(q)}\right)\right)\in \mathbb{R}^2\ :\ q\in \mathcal{S},\ Q/2<\mathcal{N}(q)\le Q,\ r \bmod{q},\  (r,q)=1\right\}, \\
X_1:= & \sqrt{N}, \quad X_2:=\sqrt{N}, \quad Y_1:=\frac{1}{2}, \quad Y_2:=\frac{1}{2},
\end{split}
\end{equation*}
we obtain
\begin{equation*}
\begin{split}
\left| \Sigma(Q,N;\mathcal{S})\right|^2 \ll N \cdot \mathop{\sum\limits_{\substack{q_1\in \mathcal{S}\\ Q/2<\mathcal{N}(q_1)\le Q}} \sum\limits_{\substack{r_1 \bmod{q_1}\\ (r_1,q_1)=1}}  \sum\limits_{\substack{q_2\in \mathcal{S}\\ Q/2<\mathcal{N}(q_2)\le Q}} \sum\limits_{\substack{r_2 \bmod{q_2}\\ (r_2,q_2)=1}}}_{\substack{\left|\left|(x_1u_1+y_1v_1)/\mathcal{N}(q_1)-(x_2u_2+y_2v_2)/\mathcal{N}(q_2)\right|\right|\le 1/\sqrt{N}\\ \left|\left|(x_1v_1-y_1u_1)/\mathcal{N}(q_1)-(x_2v_2-y_2u_2)/\mathcal{N}(q_2)\right|\right|\le 1/\sqrt{N}}} \left|b_{r_1,q_1}b_{r_2,q_2}\right| \cdot \sum\limits_{\substack{n\in \mathbb{Z}[i]\\ \mathcal{N}(n)\le N}} \left| a_n\right|^2
\end{split}
\end{equation*}
from \eqref{from}, where we write
$$
q_j=u_j+v_ji\quad \mbox{and} \quad  r_j=x_j+y_ji \quad \mbox{for } j=1,2.
$$
Since
\begin{equation*}
\left|b_{r_1,q_1}b_{r_2,q_2}\right|\le \left|b_{r_1,q_1}\right|^2+ \left|b_{r_2,q_2}\right|^2,
\end{equation*}
it follows that
\begin{equation*}
\begin{split}
& \left| \Sigma(Q,N;\mathcal{S})\right|^2\\  \ll & NZ \cdot \max\limits_{\substack{\substack{q_2\in \mathcal{S}\\ Q/2<\mathcal{N}(q_2)\le Q\\  r_2 \bmod q_2\\ (r_2,q_2)=1}}}  \sharp\Bigg\{(q_1,r_1)\ : \ q_1\in \mathcal{S}, \ Q/2<\mathcal{N}(q_1)\le Q, \ r_1 \bmod{q_1},\ (r_1,q_1)=1, \\ & \left|\left|\frac{x_1u_1+y_1v_1}{\mathcal{N}(q_1)}-\frac{k}{\mathcal{N}(q_2)}\right|\right|\le \frac{1}{\sqrt{N}}, \ 
\left|\left|\frac{x_1v_1-y_1u_1}{\mathcal{N}(q_1)}-\frac{l}{\mathcal{N}(q_2)}\right|\right|\le \frac{1}{\sqrt{N}} \Bigg\}\times\\ & \sum\limits_{\substack{q\in \mathcal{S}\\ Q/2<\mathcal{N}(q)\le Q}} \sum\limits_{\substack{r \bmod{q}\\ (r,q)=1}}  \left|b_{q,r}\right|^2,
\end{split}
\end{equation*}
where we set 
$$
Z:= \sum\limits_{\substack{n\in \mathbb{Z}[i]\\ \mathcal{N}(n)\le N}} \left| a_{n}\right|^2
$$
and 
\begin{equation} \label{kl}
k:=x_2u_2+y_2v_2\quad \mbox{and} \quad l:=x_2v_2-y_2u_2
\end{equation}
throughout the sequel. Using \eqref{hallo} and dividing both sides by $|\Sigma(Q,N;\mathcal{S})|$, we deduce that 
\begin{equation*}
\begin{split}
\Sigma(Q,N;\mathcal{S})
\ll & NZ \cdot  \max\limits_{\substack{q_2\in \mathcal{S}\\ Q/2<\mathcal{N}(q_2)\le Q\\ r_2 \bmod q_2\\ (r_2,q_2)=1}}  \sharp\Bigg\{(q_1,r_1)\ :\ q_1\in \mathcal{S}, \ Q/2<\mathcal{N}(q_1)\le Q, \ r_1 \bmod{q_1},\ (r_1,q_1)=1,\\  &
\left|\left|\frac{x_1u_1+y_1v_1}{\mathcal{N}(q_1)}-\frac{k}{\mathcal{N}(q_2)}\right|\right|\le \frac{1}{\sqrt{N}}, \
\left|\left|\frac{x_1v_1-y_1u_1}{\mathcal{N}(q_1)}-\frac{l}{\mathcal{N}(q_2)}\right|\right|\le \frac{1}{\sqrt{N}} \Bigg\}.
\end{split}
\end{equation*}

\section{Reduction to a lattice point counting problem}
\subsection{Simplification of the problem} If
$$
\left|\left|\frac{x_1u_1+y_1v_1}{\mathcal{N}(q_1)}-\frac{k}{\mathcal{N}(q_2)}\right|\right|\le \frac{1}{\sqrt{N}}, \quad \mbox{and} \quad 
\left|\left|\frac{x_1v_1-y_1u_1}{\mathcal{N}(q_1)}-\frac{l}{\mathcal{N}(q_2)}\right|\right|\le \frac{1}{\sqrt{N}},
$$
then we can find $(x_1',y_1')\in \mathbb{Z}^2$ such that 
\begin{equation} \label{2}
\left|\left|\frac{x_1u_1+y_1v_1}{\mathcal{N}(q_1)}-\frac{k}{\mathcal{N}(q_2)}\right|\right|=\left|\frac{x_1'u_1+y_1'v_1}{\mathcal{N}(q_1)}-\frac{k}{\mathcal{N}(q_2)}\right|
\end{equation}
and
\begin{equation} \label{3}
\left|\left|\frac{x_1v_1-y_1u_1}{\mathcal{N}(q_1)}-\frac{l}{\mathcal{N}(q_2)}\right|\right|= \left|\frac{x_1'v_1-y_1'u_1}{\mathcal{N}(q_1)}-\frac{l}{\mathcal{N}(q_2)}\right|.
\end{equation}
Indeed, if  
$$
x_1'=x_1+au_1+bv_1 \quad \mbox{and} \quad y_1'=y_1+av_1-bu_1,
$$
then
$$
\frac{x_1'u_1+y_1'v_1}{\mathcal{N}(q_1)}=a+\frac{x_1u_1+y_1v_1}{\mathcal{N}(q_1)} \quad \mbox{and} \quad \frac{x_1'v_1-y_1'u_1}{\mathcal{N}(q_1)}=b+\frac{x_1v_1-y_1u_1}{\mathcal{N}(q_1)},
$$
and thus for suitable $a,b\in \mathbb{Z}$, we get \eqref{2} and \eqref{3}. It follows that 
\begin{equation*}
\begin{split}
\Sigma(Q,N;\mathcal{S})
\ll & NZ\cdot   
\max\limits_{\substack{q_2\in \mathcal{S}\\ Q/2<\mathcal{N}(q_2)\le Q\\ r_2 \bmod q_2\\ (r_2,q_2)=1}}  \sharp\Bigg\{(q_1,x_1',y_1')\ : \ q_1\in \mathcal{S}, \ Q/2<\mathcal{N}(q_1)\le Q,\ \binom{x_1'}{y_1'}\in \mathbb{Z}^2,\\
& \left|\frac{x_1'u_1+y_1'v_1}{\mathcal{N}(q_1)}-\frac{k}{\mathcal{N}(q_2)}\right|\le \frac{1}{\sqrt{N}},\ 
\left|\frac{x_1'v_1-y_1'u_1}{\mathcal{N}(q_1)}-\frac{l}{\mathcal{N}(q_2)}\right|\le \frac{1}{\sqrt{N}} \Bigg\}.\end{split}
\end{equation*}
In the following, for brevity of notation, we shall replace $(q_1,u_1,v_1,x_1',y_1')$ by $(q,u,v,x,y)$. 

\subsection{Rescaling and rotating}
We may interpret the above as a problem of counting points of orthogonal lattices in a closed square because the last estimate is equivalent to
\begin{equation*}
\begin{split}
\Sigma(Q,N;\mathcal{S}) \ll  NZ\cdot  
\max\limits_{\substack{q_2\in \mathcal{S}\\ Q/2<\mathcal{N}(q_2)\le Q\\ r_2 \bmod q_2\\ (r_2,q_2)=1}} &  \sharp\left\{ (q,x,y)\ :\ q\in \mathcal{S}, \ Q/2<\mathcal{N}(q)\le Q, \ \binom{x}{y}\in \mathbb{Z}^2,\right.  \\ &  \left. 
x\binom{u/\mathcal{N}(q)}{v/\mathcal{N}(q)}+y\binom{v/\mathcal{N}(q)}{-u/\mathcal{N}(q)}\in S(k,l)\right\},
\end{split}
\end{equation*}
where $S(k,l)$ is the closed square defined by
$$
S(k,l):=\left[\frac{k}{\mathcal{N}(q_2)}-\frac{1}{\sqrt{N}},\frac{k}{\mathcal{N}(q_2)}+\frac{1}{\sqrt{N}}\right]\times \left[\frac{l}{\mathcal{N}(q_2)}-\frac{1}{\sqrt{N}},\frac{l}{\mathcal{N}(q_2)}+\frac{1}{\sqrt{N}}\right].
$$ 

Rescaling by a factor of $\sqrt{\mathcal{N}(q)}$, it follows that 
\begin{equation} \label{ditte}
\begin{split}
\Sigma(Q,N;\mathcal{S}) \ll  NZ\cdot  
\max\limits_{\substack{q_2\in \mathcal{S}\\ Q/2<\mathcal{N}(q_2)\le Q\\ r_2 \bmod q_2\\ (r_2,q_2)=1}}  & \sharp\left\{(q,x,y)\ :\ q\in \mathcal{S}, \ Q/2<\mathcal{N}(q)\le Q, \ \binom{x}{y}\in \mathbb{Z}^2, \right.\\
& \left.
x\binom{u'}{v'}+y\binom{v'}{-u'}\in \sqrt{\mathcal{N}(q)}S(k,l)\right\},
\end{split}
\end{equation}
where $\sqrt{\mathcal{N}(q)}S(k,l)$ is the square
$$
\sqrt{\mathcal{N}(q)} S(k,l):=\left\{\sqrt{\mathcal{N}(q)}{\bf r} \ :\ {\bf r}\in S(k,l)\right\},
$$
and 
$$
\binom{u'}{v'}=\frac{1}{\sqrt{\mathcal{N}(q)}}\binom{u}{v} \quad \mbox{and} \quad \binom{v'}{-u'}=\frac{1}{\sqrt{\mathcal{N}(q)}}\binom{v}{-u}
$$
are orthonormal vectors. 

For two real numbers $\mu$ and $\nu$ set 
$$
M(\mu,\nu):=\begin{pmatrix} \mu & \nu \\ -\nu & \mu \end{pmatrix}.
$$
Then rotating by applying the rotation matrix 
$$
M(u',v')=\begin{pmatrix} u' & v' \\ -v' & u' \end{pmatrix}=\frac{1}{\sqrt{\mathcal{N}(q)}} \begin{pmatrix} u & v \\ -v & u \end{pmatrix}=\frac{1}{\sqrt{\mathcal{N}(q)}}M(u,v),
$$
and replacing $y$ by $-y$, we deduce from \eqref{ditte} that
\begin{equation*}
\begin{split}
& \Sigma(Q,N;\mathcal{S}) 
\ll NZ\times\\ & \max\limits_{\substack{q_2\in \mathcal{S}\\ Q/2<\mathcal{N}(q_2)\le Q\\ r_2 \bmod q_2\\ (r_2,q_2)=1}}  \sharp\left\{(q,x,y)\ :\  q\in \mathcal{S}, \ Q/2<\mathcal{N}(q)\le Q, \ \binom{x}{y}\in  
\mathbb{Z}^2\cap M(u,v)S(k,l)\right\},
\end{split}
\end{equation*}
where 
$$
M(u,v)S(k,l):=\left\{M(u,v){\bf s}\ :\ {\bf s}\in S(k,l)\right\}
$$
is the square $\sqrt{\mathcal{N}(q)}S(k,l)$, rotated by applying the matrix $M(u',v')$.

\subsection{Switching between lattices}
The closed square $M(u,v)S(k,l)$ is contained in the closed disk with radius 
\begin{equation} \label{radius}
R:=\sqrt{\frac{4Q}{N}} 
\end{equation}
and midpoint
$$
M(u,v)\binom{k/\mathcal{N}(q_2)}{l/\mathcal{N}(q_2)}=u\binom{k/\mathcal{N}(q_2)}{l/\mathcal{N}(q_2)}+v\binom{l/\mathcal{N}(q_2)}{-k/\mathcal{N}(q_2)}.
$$
Therefore, we have
\begin{equation*}
\begin{split}
\Sigma(Q,N;\mathcal{S}) 
\ll NZ\cdot 
 \max\limits_{\substack{q_2\in \mathcal{S}\\ Q/2<\mathcal{N}(q_2)\le Q\\ r_2 \bmod q_2\\ (r_2,q_2)=1}} & \sharp\Bigg\{  (q,x,y)\ :\  q\in \mathcal{S}, \ Q/2<\mathcal{N}(q)\le Q, \\
& \binom{x}{y}\in \mathbb{Z}^2\cap D_{R}\left(u\binom{k/\mathcal{N}(q_2)}{l/\mathcal{N}(q_2)}+v\binom{l/\mathcal{N}(q_2)}{-k/\mathcal{N}(q_2)}\right)\Bigg\},
\end{split}
\end{equation*}
where $D_R({\bf r})$ is the closed disk with radius $R$ and midpoint ${\bf r}$.  
Hence, we count points of the standard lattice $\mathbb{Z}^2$ contained in closed $R$-neighborhoods of points of the lattice
$$
\mathcal{L}:=\left(\tilde{x}\binom{k/\mathcal{N}(q_2)}{l/\mathcal{N}(q_2)}+\tilde{y}\binom{l/\mathcal{N}(q_2)}{-k/\mathcal{N}(q_2)}\right)_{\binom{\tilde{x}}{\tilde{y}}\in \mathbb{Z}^2}.
$$
If $R\ge 1/2$, then there are $O(R^2)$ $\mathbb{Z}^2$-points in the closed $R$-neighborhood of every $\mathcal{L}$-point, and using \eqref{radius}, it therefore follows that
\begin{equation} \label{Rlarge}
\Sigma(Q,N;\mathcal{S}) 
\ll QZ\cdot \sharp \left\{q\in \mathcal{S} \ :\  Q/2<\mathcal{N}(q)\le Q\right\}.
\end{equation}

In the following, we assume that $R<1/2$. We observe that counting $\mathbb{Z}^2$-points in closed $R$-neighborhoods of $\mathcal{L}$-points
amounts to the same as counting $\mathcal{L}$-points  in closed $R$-neighborhoods of $\mathbb{Z}^2$-points. By this switch of lattices, we have 
\begin{equation} \label{das}
\begin{split}
\Sigma(Q,N;\mathcal{S}) 
\ll NZ\cdot
\max\limits_{\substack{q_2\in \mathcal{S}\\ Q/2<\mathcal{N}(q_2)\le Q\\ r_2 \bmod q_2\\ (r_2,q_2)=1}} & \sharp\Bigg\{  (q,x,y)\ :\  q\in \mathcal{S}, \ Q/2<\mathcal{N}(q)\le Q, \ \binom{x}{y}\in \mathbb{Z}^2,\\ &
u\binom{k/\mathcal{N}(q_2)}{l/\mathcal{N}(q_2)}+v\binom{l/\mathcal{N}(q_2)}{-k/\mathcal{N}(q_2)}\in D_R\binom{x}{y}\Bigg\}.
\end{split}
\end{equation}
Since $R<1/2$, \eqref{das} is equivalent to 
\begin{equation} \label{smallR}
\begin{split}
& \Sigma(Q,N;\mathcal{S}) \ll NZ\times\\ &    
\max\limits_{\substack{q_2\in \mathcal{S}\\ Q/2<\mathcal{N}(q_2)\le Q\\ r_2 \bmod q_2\\ (r_2,q_2)=1}}  \sharp\left\{q\in \mathcal{S} \ : \ Q/2<\mathcal{N}(q)\le Q, \    
\binom{f\left((uk+vl)/\mathcal{N}(q_2)\right)}{f\left((-vk+ul)/\mathcal{N}(q_2)\right)}\in D_R({\bf 0})\right\},
\end{split}
\end{equation}
where $f(z)$ is defined as in \eqref{fdef}.
We combine \eqref{Rlarge} and \eqref{smallR} below.

\begin{proposition} \label{lab} Let ${\bf 1}_I$ be the indicator function of the interval $I$. Then we have 
\begin{equation} \label{ex}
\begin{split}
& \Sigma(Q,N;\mathcal{S}) \ll QZ\cdot \sharp \left\{q\in \mathcal{S} \ :\  Q/2<\mathcal{N}(q)\le Q\right\}+NZ\cdot {\bf 1}_{(0,1/2)}(R) \times\\ &    
\max\limits_{\substack{q_2\in \mathcal{S}\\ Q/2<\mathcal{N}(q_2)\le Q\\ r_2 \bmod q_2\\ (r_2,q_2)=1}}  \sharp\left\{q\in \mathcal{S} \ : \ Q/2<\mathcal{N}(q)\le Q, \    
\binom{f\left((uk+vl)/\mathcal{N}(q_2)\right)}{f\left((-vk+ul)/\mathcal{N}(q_2)\right)}\in D_R({\bf 0})\right\}.
\end{split}
\end{equation}
\end{proposition}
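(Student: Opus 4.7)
The plan is to assemble Proposition \ref{lab} as a bookkeeping combination of two estimates already derived in the preceding subsections, separated according to the size of the radius $R=\sqrt{4Q/N}$ from \eqref{radius}: the trivial disk-counting bound \eqref{Rlarge} valid when $R \ge 1/2$, and the nearest-integer reformulation \eqref{smallR} valid when $R < 1/2$. The role of the factor ${\bf 1}_{(0,1/2)}(R)$ in \eqref{ex} is precisely to switch on the appropriate term in each regime.

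In the regime $R \ge 1/2$, I would observe that ${\bf 1}_{(0,1/2)}(R)=0$, so the second summand on the right of \eqref{ex} vanishes, while the first summand reads $QZ \cdot \sharp\{q\in\mathcal{S} \ :\ Q/2<\mathcal{N}(q)\le Q\}$, which is exactly \eqref{Rlarge}. The key input there was that a closed disk of radius $R$ contains $O(R^2)=O(Q/N)$ points of $\mathbb{Z}^2$, and the identity $R^2 N = 4Q$ supplies the required factor of $Q$.

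In the regime $R < 1/2$, the indicator ${\bf 1}_{(0,1/2)}(R)$ equals $1$, so the second summand of \eqref{ex} is precisely the right-hand side of \eqref{smallR}. That estimate rested on the switch-of-lattices observation (counting $\mathbb{Z}^2$-points in $R$-neighborhoods of $\mathcal{L}$-points equals counting $\mathcal{L}$-points in $R$-neighborhoods of $\mathbb{Z}^2$-points) together with translation of each $\mathcal{L}$-point to its nearest $\mathbb{Z}^2$-point in each coordinate, so that the containment condition is expressed via the function $f$ of \eqref{fdef}. Combining the two cases, one of the two summands on the right of \eqref{ex} majorizes $\Sigma(Q,N;\mathcal{S})$ in each regime while the other is non-negative, so their sum is always an upper bound. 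Thus the proposition follows with no further argument; no genuine obstacle arises at this stage, since all the heavy lifting — the double large sieve, the rescaling by $\sqrt{\mathcal{N}(q)}$, the rotation by $M(u,v)$, and the switch of lattices — has already been carried out upstream.
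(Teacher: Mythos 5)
Your proposal is correct and matches the paper's own (very short) argument: the proposition is obtained exactly by combining \eqref{Rlarge} for $R\ge 1/2$ with \eqref{smallR} for $R<1/2$, the indicator ${\bf 1}_{(0,1/2)}(R)$ switching on the second term only in the latter regime, and non-negativity of the unused term making the sum an upper bound in both cases. Nothing further is needed.
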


\section{Reproof of a slightly weakened version of Theorem 1}
In this section, we reprove Theorem 1 in a slightly weakened form, namely we establish the inequality
\begin{equation} \label{ine}
\sum\limits_{\substack{q\in \mathbb{Z}[i]\setminus\{0\}\\ \mathcal{N}(q)\le Q}} \sum\limits_{\substack{r \bmod{q}\\ (r,q)=1}} \left|\sum\limits_{\substack{n\in \mathbb{Z}[i]\\ \mathcal{N}(n)\le N}}  a_n \cdot e\left(\mbox{\rm Tr}\left(\frac{nr}{2q}\right)\right)\right|^2 \ll \left(Q^2+N\log(2Q)\right)Z.
\end{equation}

\subsection{Spacing modulo 1}
To estimate the maximum on the right-hand side of \eqref{ex}, we prove the following lemma on the spacing of the points 
$$
\binom{f\left((uk+vl)/\mathcal{N}(q_2)\right)}{f\left((-vk+ul)/\mathcal{N}(q_2)\right)}.
$$

\begin{lemma} \label{spacinglemma} Assume that $u_2+v_2i=q_2\in \mathbb{Z}[i]\setminus\{0\}$, $x_2+y_2i=r_2\in \mathbb{Z}[i]$, $(r_2,q_2)=1$, $k,l$ are given as in \eqref{kl}, and $u,v,\tilde{u},\tilde{v}\in \mathbb{Z}$. Then 
\begin{equation*}
\begin{split}
& \left|f\left(\frac{uk+vl}{\mathcal{N}(q_2)}\right)-f\left(\frac{\tilde{u}k+\tilde{v}l}{\mathcal{N}(q_2)}\right)\right|^2 +\left|f\left(\frac{-vk+ul}{\mathcal{N}(q_2)}\right)-f\left(\frac{-\tilde{v}k+\tilde{u}l}{\mathcal{N}(q_2)}\right)\right|^2 \\ & 
\begin{cases} \ge 1/\mathcal{N}(q_2) & \mbox{ if } q_2\nmid\left((u-\tilde{u})+(v-\tilde{v})i\right),\\ 0 & \mbox{ if } q_2|\left((u-\tilde{u})+(v-\tilde{v})i\right).
\end{cases}
\end{split}
\end{equation*}
\end{lemma}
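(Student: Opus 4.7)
The plan is to recognise the two quantities
\[
\frac{uk+vl}{\mathcal{N}(q_2)} - \frac{\tilde u k + \tilde v l}{\mathcal{N}(q_2)} \quad\text{and}\quad \frac{-vk+ul}{\mathcal{N}(q_2)} - \frac{-\tilde v k + \tilde u l}{\mathcal{N}(q_2)}
\]
as the real and negated imaginary parts of a single Gaussian-rational number, and then to exploit the ring structure of $\mathbb{Z}[i]$. Setting $a := u - \tilde u$ and $b := v - \tilde v$, the starting identity is $r_2 \overline{q_2} = (x_2 + y_2 i)(u_2 - v_2 i) = k - l i$, so that
\[
z \;:=\; \frac{(a+bi)\,r_2}{q_2} \;=\; \frac{(a+bi)(k-li)}{\mathcal{N}(q_2)} \;=\; \frac{ak+bl}{\mathcal{N}(q_2)} \;+\; \frac{bk-al}{\mathcal{N}(q_2)}\,i.
\]
Hence the two expressions appearing inside the $f$-differences are precisely $\Re z$ and $-\Im z$.

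Next I would reduce the left-hand side of the lemma to a lower bound for $\|\Re z\|^2 + \|\Im z\|^2$. Since $f(x) - f(y) \equiv x - y \pmod 1$, the quantity $f(x) - f(y)$ is one of the integer translates of $x-y$, so $|f(x) - f(y)| \ge \|x - y\|$. Applying this coordinatewise shows that the left-hand side of the lemma is at least $\|\Re z\|^2 + \|-\Im z\|^2 = \|\Re z\|^2 + \|\Im z\|^2$.

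Finally I would run a Gaussian-integer dichotomy on $z$. If $q_2 \mid a+bi$ in $\mathbb{Z}[i]$, write $a+bi = q_2 c$ with $c \in \mathbb{Z}[i]$; then $z = c r_2 \in \mathbb{Z}[i]$, so $\Re z, \Im z$ are integers, and in fact both $f$-differences vanish, confirming the second case. Conversely, suppose $q_2 \nmid a+bi$. Since $\mathbb{Z}[i]$ is a UFD and $(r_2, q_2) = 1$, we get $q_2 \nmid (a+bi) r_2$, so $z \notin \mathbb{Z}[i]$. Choose $m, n \in \mathbb{Z}$ with $|\Re z - m| = \|\Re z\|$ and $|\Im z - n| = \|\Im z\|$. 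Then $(z - (m + ni)) q_2 = (a+bi) r_2 - (m + ni) q_2$ is a nonzero element of $\mathbb{Z}[i]$, whose norm is therefore at least $1$. Expanding the norm gives $(\|\Re z\|^2 + \|\Im z\|^2)\,\mathcal{N}(q_2) \ge 1$, which yields the first case of the lemma.

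The only non-routine step is the algebraic identification above, \emph{i.e.}\ spotting the element $z = (a+bi)r_2/q_2$ whose real and imaginary parts pair up exactly with the two combinations $uk+vl$ and $-vk+ul$. Once that identification is made, everything else rests on the trivial observation that a nonzero Gaussian integer has norm at least $1$, with the coprimality hypothesis $(r_2, q_2) = 1$ used precisely to guarantee that $z$ is not a Gaussian integer whenever $q_2 \nmid a + bi$.
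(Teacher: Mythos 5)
Your argument is correct and complete, and it takes a somewhat different route from the paper, although both proofs pivot on the same two ingredients: the identity $k-li=r_2\overline{q_2}$ and the coprimality $(r_2,q_2)=1$. The paper argues in coordinates: it writes the two $f$-differences exactly as $\frac{u_0k+v_0l}{\mathcal{N}(q_2)}-\mu$ and $\frac{-v_0k+u_0l}{\mathcal{N}(q_2)}-\nu$ with $\mu,\nu\in\mathbb{Z}$ and $u_0=u-\tilde{u}$, $v_0=v-\tilde{v}$, expands the sum of their squares, and uses $k^2+l^2=(x_2^2+y_2^2)\mathcal{N}(q_2)$ to conclude that the quantity is a non-negative integer multiple of $1/\mathcal{N}(q_2)$, hence $0$ or $\ge 1/\mathcal{N}(q_2)$; the vanishing case is then identified with the congruences $u_0k+v_0l\equiv -v_0k+u_0l\equiv 0\bmod{\mathcal{N}(q_2)}$, which via $k-li=r_2\overline{q_2}$ and coprimality are equivalent to $q_2\mid u_0+v_0i$. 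You instead work intrinsically in $\mathbb{Z}[i]$: you recognize the two coordinate differences as $\Re z$ and $-\Im z$ for the single element $z=(a+bi)r_2/q_2$, bound the left-hand side from below by $\|\Re z\|^2+\|\Im z\|^2$ using $|f(x)-f(y)|\ge\|x-y\|$, and settle both cases with one norm estimate: if $q_2\nmid a+bi$ then, by Euclid's lemma in the PID $\mathbb{Z}[i]$ together with $(r_2,q_2)=1$, the element $(a+bi)r_2-(m+ni)q_2$ is a nonzero Gaussian integer, so its norm $\bigl(\|\Re z\|^2+\|\Im z\|^2\bigr)\mathcal{N}(q_2)$ is at least $1$; if $q_2\mid a+bi$ then $z\in\mathbb{Z}[i]$ and the $1$-periodicity of $f$ makes both differences vanish exactly. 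Your version is shorter and makes the underlying lattice structure transparent (the points in question are just $(a+bi)r_2/q_2$ modulo $\mathbb{Z}[i]$), whereas the paper's expansion additionally exhibits the quantity itself as an element of $\frac{1}{\mathcal{N}(q_2)}\mathbb{Z}$; for the lemma as stated, and for its later use in \eqref{oh}, your argument gives everything that is needed.
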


\begin{proof} Clearly, there exist integers $\mu$ and $\nu$ such that 
$$
f\left(\frac{uk+vl}{\mathcal{N}(q_2)}\right)-f\left(\frac{\tilde{u}k+\tilde{v}l}{\mathcal{N}(q_2)}\right)=\frac{u_0k+v_0l}{\mathcal{N}(q_2)}-\mu 
$$
and 
$$
f\left(\frac{-vk+ul}{\mathcal{N}(q_2)}\right)-f\left(\frac{-\tilde{v}k+\tilde{u}l}{\mathcal{N}(q_2)}\right)= \frac{-v_0k+u_0l}{\mathcal{N}(q_2)}-\nu,
$$
where $u_0:=u-\tilde{u}$ and $v_0:=v-\tilde{v}$. It follows that
\begin{equation*}
\begin{split}
& \left|f\left(\frac{uk+vl}{\mathcal{N}(q_2)}\right)-f\left(\frac{\tilde{u}k+\tilde{v}l}{\mathcal{N}(q_2)}\right)\right|^2 +\left|f\left(\frac{-vk+ul}{\mathcal{N}(q_2)}\right)-f\left(\frac{-\tilde{v}k+\tilde{u}l}{\mathcal{N}(q_2)}\right)\right|^2\\ 
= & \left(\frac{u_0k+v_0l}{\mathcal{N}(q_2)}-\mu\right)^2+\left(\frac{-v_0k+u_0l}{\mathcal{N}(q_2)}-\nu\right)^2\\
= & \frac{(u_0^2+v_0^2)(k^2+l^2)}{\mathcal{N}(q_2)^2}-2\left(\mu\cdot \frac{u_0k+v_0l}{\mathcal{N}(q_2)}+\nu \cdot \frac{-v_0k+u_0l}{\mathcal{N}(q_2)}\right)+\mu^2+\nu^2.
\end{split}
\end{equation*}
From \eqref{kl}, we have
\begin{equation} \label{and}
k^2+l^2=(x_2u_2+y_2v_2)^2+(x_2v_2-y_2u_2)^2=(x_2^2+y_2^2)(u_2^2+v_2^2)=(x_2^2+y_2^2)\mathcal{N}(q_2).
\end{equation}
Hence,
\begin{equation*}
\begin{split}
& \left|f\left(\frac{uk+vl}{\mathcal{N}(q_2)}\right)-f\left(\frac{\tilde{u}k+\tilde{v}l}{\mathcal{N}(q_2)}\right)\right|^2 +\left|f\left(\frac{-vk+ul}{\mathcal{N}(q_2)}\right)-f\left(\frac{-\tilde{v}k+\tilde{u}l}{\mathcal{N}(q_2)}\right)\right|^2\\ 
= & \frac{(u_0^2+v_0^2)(x_2^2+y_2^2)-2\left(\mu(u_0k+v_0l)+\nu(-v_0k+u_0l)\right)}{\mathcal{N}(q_2)}+\mu^2+\nu^2.
\end{split}
\end{equation*}
By non-negativity, this equals zero or is greater or equal to $1/\mathcal{N}(q_2)$. Further, we have 
\begin{equation*}
\left|f\left(\frac{uk+vl}{\mathcal{N}(q_2)}\right)-f\left(\frac{\tilde{u}k+\tilde{v}l}{\mathcal{N}(q_2)}\right)\right|^2 +\left|f\left(\frac{-vk+ul}{\mathcal{N}(q_2)}\right)-f\left(\frac{-\tilde{v}k+\tilde{u}l}{\mathcal{N}(q_2)}\right)\right|^2=0
\end{equation*}
if and only if 
\begin{equation} \label{system}
\begin{split}
u_0k+v_0l\equiv & 0 \bmod{\mathcal{N}(q_2)},\\
-v_0k+u_0l\equiv & 0 \bmod{\mathcal{N}(q_2)}.
\end{split}
\end{equation}
This is equivalent to $\mathcal{N}(q_2)| (u_0+v_0i)(k-li)$. Further, we observe that 
$$
k-li=(x_2+y_2i)(u_2-v_2i)=r_2\overline{q}_2.
$$
Since $(r_2,q_2)=1$, we deduce that 
\eqref{system} is equivalent to $q_2|u_0+v_0i$ and hence $q_2|(u-\tilde{u})+(v-\tilde{v})i$. This completes the proof. 
\end{proof}

\subsection{Counting points in disks} Let the conditions in Lemma \ref{spacinglemma} be satisfied. Then it follows from the same lemma that 
\begin{equation} \label{oh}
\binom{f\left((uk+vl)/\mathcal{N}(q_2)\right)}{f\left((-vk+ul)/\mathcal{N}(q_2)\right)} =
\binom{f\left((\tilde{u}k+\tilde{v}l)/\mathcal{N}(q_2)\right)}{f\left((-\tilde{v}k+\tilde{u}l)/\mathcal{N}(q_2)\right)}
\end{equation}
if and only if $q_2|\left((u-\tilde{u})+(v-\tilde{v})i\right)$. If $Q/2<\mathcal{N}(q_2)\le Q$ and $L\ge 1$, then for every $q=u+vi$ with $\mathcal{N}(q)\le Q$, the number of $\tilde{q}=\tilde{u}+\tilde{v}i$ such that
$\mathcal{N}(\tilde{q})\le LQ$ and $q_2|\left((u-\tilde{u})+(v-\tilde{v})i\right)$ is $O(L)$. (We note that this is the point where our restriction to dyadic intervals is crucial.) Therefore, again by Lemma \ref{spacinglemma}, the number of $u+vi=q\in \mathcal{S}$ such that $\mathcal{N}(q)\le LQ$ and 
$$
\binom{f\left((uk+vl)/\mathcal{N}(q_2)\right)}{f\left((-vk+ul)/\mathcal{N}(q_2)\right)}\in D_R({\bf 0})
$$
is bounded by a constant times $L$ times the maximum number of points in a disk with radius $R$ such that the distance between any two of them is greater or equal $1/\sqrt{\mathcal{N}(q_2)}$. 
This maximum number is bounded by 1 plus the maximum number of open disks with radius $1/(2\sqrt{\mathcal{N}(q_2)})$ that can be packed into a disk of radius $2R$ without overlaps. The latter is bounded by the area of a disk with radius  $2R$ divided by the area of a disk with radius $1/(2\sqrt{\mathcal{N}(q_2)})$. Altogether, we thus get the following.

\begin{proposition} \label{ballcount} Assume that $Q,L\ge 1$ and $R>0$. Then 
\begin{equation*}
\max\limits_{\substack{q_2\in \mathcal{S}\\ Q/2<\mathcal{N}(q_2)\le Q\\ r_2 \bmod q_2\\ (r_2,q_2)=1}}  \sharp\left\{q\in \mathcal{S} \ : \ \mathcal{N}(q)\le LQ, \    
\binom{f\left((uk+vl)/\mathcal{N}(q_2)\right)}{f\left((-vk+ul)/\mathcal{N}(q_2)\right)}\in D_R({\bf 0})\right\} 
\ll \left(1+\frac{R^2}{1/Q}\right)L. 
\end{equation*}
\end{proposition}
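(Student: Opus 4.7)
The plan is to formalize the packing-and-pigeonholing argument sketched just before the proposition. Fix $q_2 \in \mathcal{S}$ with $Q/2 < \mathcal{N}(q_2) \le Q$ and fix $r_2 \bmod q_2$ coprime to $q_2$, so that $k,l$ are determined by \eqref{kl}. First I would invoke Lemma~\ref{spacinglemma} to observe that the map
\[
\Phi : q = u+vi \;\longmapsto\; \binom{f\left((uk+vl)/\mathcal{N}(q_2)\right)}{f\left((-vk+ul)/\mathcal{N}(q_2)\right)}
\]
factors through the quotient $\mathbb{Z}[i]/(q_2)$: namely, $\Phi(q) = \Phi(\tilde{q})$ exactly when $q_2 \mid (q - \tilde{q})$.

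Next I would bound the size of each fiber of $\Phi$ over the set $\{q \in \mathcal{S} : \mathcal{N}(q) \le LQ\}$. Because $(q_2)$ is a sublattice of $\mathbb{Z}[i]$ of covolume $\mathcal{N}(q_2) \asymp Q$, the number of elements of any fixed residue class mod $q_2$ lying in the disk $\mathcal{N}(z) \le LQ$ is $O(1 + LQ/\mathcal{N}(q_2)) = O(L)$. \emph{This is exactly the place where the dyadic restriction $Q/2 < \mathcal{N}(q_2)$ enters}, and it is the step most worth highlighting. Consequently,
\[
\#\{q \in \mathcal{S} : \mathcal{N}(q) \le LQ,\; \Phi(q) \in D_R(\mathbf{0})\} \;\ll\; L \cdot \#\bigl(\Phi(\mathcal{S}_{\le LQ}) \cap D_R(\mathbf{0})\bigr),
\]
where I write $\mathcal{S}_{\le LQ} := \{q \in \mathcal{S} : \mathcal{N}(q) \le LQ\}$.

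It remains to bound the number $N_0$ of distinct images of $\Phi$ in the disk $D_R(\mathbf{0})$. By Lemma~\ref{spacinglemma}, any two such distinct images are separated by at least $1/\sqrt{\mathcal{N}(q_2)} \ge 1/\sqrt{Q}$. Hence the open disks of radius $1/(2\sqrt{Q})$ centered at these points are pairwise disjoint. If $R < 1/(2\sqrt{Q})$, then $N_0 \le 1$; otherwise all such small disks are contained in the disk of radius $R + 1/(2\sqrt{Q}) \le 2R$ around $\mathbf{0}$, and comparing areas gives $N_0 \ll 1 + QR^2$. Either way $N_0 \ll 1 + QR^2$.

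Combining the two bounds yields the asserted estimate $\ll (1 + QR^2)L$. No serious obstacle arises; the main subtlety is simply that the packing argument must be prefaced by the dyadic-fiber pigeonhole, and one must be careful about the edge case $R < 1/(2\sqrt{Q})$ which accounts for the additive $1$ in $1 + QR^2$.
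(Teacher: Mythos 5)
Your proposal is correct and follows essentially the same route as the paper: you use Lemma~\ref{spacinglemma} both to reduce to counting residue classes modulo $q_2$ (with the dyadic restriction $Q/2<\mathcal{N}(q_2)\le Q$ giving the $O(L)$ fiber bound) and to obtain the $1/\sqrt{\mathcal{N}(q_2)}$-spacing, followed by the same disk-packing area comparison yielding $\ll 1+QR^2$ distinct points. The only differences are cosmetic: you pack disks of radius $1/(2\sqrt{Q})$ rather than $1/(2\sqrt{\mathcal{N}(q_2)})$ and handle the small-$R$ case explicitly instead of adding $1$ to the area ratio.
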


Now \eqref{radius}, Proposition \ref{lab} and Proposition \eqref{ballcount} give
\begin{equation*}
\Sigma(Q,N;\mathcal{S})\ll \left(N+Q^2\right)Z.
\end{equation*}
This holds in particular for $\mathcal{S}=\mathbb{Z}[i]$, implying \eqref{ine} upon summing up the contributions of $O(\log 2Q)$ dyadic intervals containing the moduli norms $\mathcal{N}(q)$.

\section{Reproof  of a slightly weakened version of Theorem 2} 
\subsection{General Fourier analytic approach} 
To get savings for sparse subsets $\mathcal{S}$ of $\mathbb{Z}[i]$, it may be useful to apply  Fourier analysis to estimate the right-hand side of \eqref{ex}. 
Let $\Phi\ :\ \mathbb{R}\rightarrow \mathbb{R}$ be a Schwartz class function which is positive in the interval $[1/2,1]$ and assume that $R<1/2$. Then we observe that 
\begin{equation} \label{ri}
\begin{split}
& \sharp\left\{q\in \mathcal{S} \ : \ Q/2<\mathcal{N}(q)\le Q, \    
\binom{f\left((uk+vl)/\mathcal{N}(q_2)\right)}{f\left((-vk+ul)/\mathcal{N}(q_2)\right)}\in D_R({\bf 0})\right\} \\
\ll & \sum\limits_{q\in \mathcal{S}} \Phi\left(\frac{\mathcal{N}(q)}{Q}\right)\cdot \sum\limits_{x\in \mathbb{Z}} \sum\limits_{y\in \mathbb{Z}} e^{-\pi\left(((uk+lv)/\mathcal{N}(q_2)-x)^2+((-vk+ul)/\mathcal{N}(q_2)-y)^2\right)/R^2}.
\end{split}
\end{equation}
Now the Poisson summation formula, applied to the sums over $x$ and $y$, transforms the right-hand side of \eqref{ri} into
\begin{equation} \label{4}
\begin{split}
& R^2 \cdot \sum\limits_{q\in \mathcal{S}}  \Phi\left(\frac{\mathcal{N}(q)}{Q}\right)\cdot  \sum\limits_{\alpha\in \mathbb{Z}} \sum\limits_{\beta\in \mathbb{Z}} e^{-\pi (\alpha^2 +\beta^2)R^2} 
e\left(\frac{\alpha(uk+vl)+\beta(-vk+ul)}{\mathcal{N}(q_2)}\right)\\
= & R^2 \cdot \sum\limits_{\alpha\in \mathbb{Z}} \sum\limits_{\beta\in \mathbb{Z}} e^{-\pi (\alpha^2 +\beta^2)R^2} \cdot
\sum\limits_{q\in \mathcal{S}}  \Phi\left(\frac{\mathcal{N}(q)}{Q}\right)\cdot e\left(\frac{u(\alpha k+\beta l)+v(-\beta k+\alpha l)}{\mathcal{N}(q_2)}\right).
\end{split}
\end{equation}
Using \eqref{radius}, Proposition \ref{lab} and the above, we obtain the following.

\begin{proposition} \label{Fou} We have 
\begin{equation} \label{hop} 
\begin{split}
& \Sigma(Q,N;\mathcal{S}) \ll QZ\cdot \sharp \left\{q\in \mathcal{S} \ :\  Q/2<\mathcal{N}(q)\le Q\right\}+QZ\times\\ & \max\limits_{\substack{q_2\in \mathcal{S}\\ Q/2<\mathcal{N}(q_2)\le Q\\ r_2 \bmod q_2\\ (r_2,q_2)=1}}
\sum\limits_{\alpha\in \mathbb{Z}} \sum\limits_{\beta\in \mathbb{Z}} e^{-4\pi (\alpha^2 +\beta^2)Q/N} \cdot
\sum\limits_{q\in \mathcal{S}}  \Phi\left(\frac{\mathcal{N}(q)}{Q}\right)\cdot e\left(\frac{u(\alpha k+\beta l)+v(-\beta k+\alpha l)}{\mathcal{N}(q_2)}\right). 
\end{split}
\end{equation}
\end{proposition}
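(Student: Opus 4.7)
The plan is to combine Proposition \ref{lab} directly with the smoothing and Poisson summation already carried out in \eqref{ri} and \eqref{4}; almost every ingredient is on the page, so the argument is largely bookkeeping. I would begin by invoking Proposition \ref{lab} and splitting its right-hand side into two pieces. The first piece, $QZ\cdot \sharp\{q\in \mathcal{S}: Q/2<\mathcal{N}(q)\le Q\}$, already sits in the form required by \eqref{hop}, so it can simply be retained. The second piece, which carries the indicator ${\bf 1}_{(0,1/2)}(R)$, is the only part needing any real work, and it is active only when $R<1/2$.

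To estimate that second piece, I would replace the two sharp cutoffs by smooth majorants. Precisely, I would bound the indicator of the disk $D_R(\mathbf{0})\subset \mathbb{R}^2$ by a constant multiple of the Gaussian $\exp(-\pi(\xi^2+\eta^2)/R^2)$, and the indicator of the dyadic window $Q/2<\mathcal{N}(q)\le Q$ by the Schwartz weight $\Phi(\mathcal{N}(q)/Q)$ introduced just before \eqref{ri}. These replacements yield inequality \eqref{ri} directly, using the fact that whenever $(f((uk+vl)/\mathcal{N}(q_2)),f((-vk+ul)/\mathcal{N}(q_2)))$ lies in $D_R(\mathbf{0})$ the nearest integer translate contributes at least $e^{-\pi}$ to the Gaussian double sum.

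Next I would apply two-dimensional Poisson summation to the inner sum over $(x,y)\in \mathbb{Z}^2$. Since the two-variable Gaussian is its own Fourier transform up to the rescaling factor $R^2$, this converts the Gaussian double sum into the oscillating expansion \eqref{4}, namely the decay factor $R^2\exp(-\pi(\alpha^2+\beta^2)R^2)$ paired with the additive character $e(\alpha(uk+vl)/\mathcal{N}(q_2)+\beta(-vk+ul)/\mathcal{N}(q_2))$. A routine regrouping rewrites the phase as $e((u(\alpha k+\beta l)+v(-\beta k+\alpha l))/\mathcal{N}(q_2))$, which is exactly the form appearing on the right-hand side of \eqref{hop}.

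Finally I would substitute $R^2=4Q/N$ from \eqref{radius}. The prefactor $NZ\cdot R^2$ then collapses to $4QZ$, matching the $QZ$ in \eqref{hop} up to an implicit absolute constant, and the Gaussian exponent $-\pi(\alpha^2+\beta^2)R^2$ becomes $-4\pi(\alpha^2+\beta^2)Q/N$, as written. Because the Fourier-dual expression equals the nonnegative Gaussian double sum, the indicator ${\bf 1}_{(0,1/2)}(R)$ may be discarded: when $R\ge 1/2$ the first term of \eqref{hop} already absorbs everything by \eqref{Rlarge}. I do not anticipate any real obstacle; the only conceptual step is the initial smoothing by Gaussian and Schwartz majorants, which is standard and causes no difficulty since we only need an upper bound.
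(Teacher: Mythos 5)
Your proposal is correct and follows essentially the same route as the paper: invoke Proposition \ref{lab}, majorize the disk and dyadic-window indicators by the Gaussian and the Schwartz weight $\Phi$ as in \eqref{ri}, apply two-dimensional Poisson summation to reach \eqref{4}, and substitute $R^2=4Q/N$ from \eqref{radius}, discarding the indicator ${\bf 1}_{(0,1/2)}(R)$ by nonnegativity of the smoothed sum. No gaps.
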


For suitable sets $\mathcal{S}$, we may hope to be able to estimate the inner sum over $q$ on the right-hand side of \eqref{hop} non-trivially. We now consider the case when $\mathcal{S}=\mathbb{N}$, thus recovering a slightly weakened form of Theorem 2, namely the bound
\begin{equation}\label{iine}
\sum\limits_{\substack{q\in \mathbb{N}\\ q\le Q}} \sum\limits_{\substack{r \bmod{q}\\ (r,q)=1}} \left|\sum\limits_{\substack{n\in \mathbb{Z}[i]\\ \mathcal{N}(n)\le N}}  a_n \cdot e\left(\mbox{\rm Tr}\left(\frac{nr}{2q}\right)\right)\right|^2 \ll \left(Q^3+Q^2\sqrt{N}+N\log 2Q\right)\sum\limits_{\substack{n\in \mathbb{Z}[i]\\ \mathcal{N}(n)\le N}} |a_n|^2.
\end{equation}

We note that the above Fourier analytic approach also works if $\mathcal{S}$ equals the full set $\mathbb{Z}[i]$. In this case, the Poisson summation formula, applied to the sum over $q$, leads to a counting problem which is in a sense dual to that  considered in subsection 5.2. The resulting estimate for $\Sigma(Q,N;\mathbb{Z}[i])$ will be the same, though. For this reason, we don't carry out this calculation here.

\subsection{Case of integer moduli}
In the situation of Theorem 2 we have $\mathcal{S}=\mathbb{N}$, and the contribution of $Q/\sqrt{2}<q\le Q$ equals  
\begin{equation} \label{rupa}
\sum\limits_{\substack{q\in \mathbb{N}\\ Q/\sqrt{2}<q\le Q}} \sum\limits_{\substack{r \bmod{q}\\ (r,q)=1}} \left|\sum\limits_{\substack{n\in \mathbb{Z}[i]\\ \mathcal{N}(n)\le N}}  a_n \cdot e\left(\mbox{\rm Tr}\left(\frac{nr}{2q}\right)\right)\right|^2= \Sigma\left(Q^2,N;\mathbb{N}\right).
\end{equation}
On choosing $\Phi$ in such a way that 
$$
\Phi(z)=e^{-\pi z} \quad \mbox{ if } z\ge 0,
$$
\eqref{kl} and Proposition \ref{Fou} give
\begin{equation*}  
\begin{split}
\Sigma\left(Q^2,N;\mathbb{N}\right) \ll & Q^3Z+Q^2Z\times\\ &  \max\limits_{\substack{u_2\in \mathbb{Z}\\ Q/\sqrt{2}<u_2\le Q\\ x_2+y_2i \bmod u_2\\ (x_2+y_2i,u_2)=1}}
\sum\limits_{\alpha\in \mathbb{Z}} \sum\limits_{\beta\in \mathbb{Z}} e^{-4\pi (\alpha^2 +\beta^2)Q^2/N} \cdot 
\sum\limits_{u\in \mathbb{Z}}  e^{-\pi u^2/Q^2} \cdot e\left(\frac{u(\alpha x_2-\beta y_2)}{u_2}\right) 
\end{split}
\end{equation*}
upon noting that $v=0=v_2$ in this case. Applying the Poisson summation formula to the inner sum over $u$, we get
\begin{equation*}
\begin{split}
\sum\limits_{u\in \mathbb{Z}}  e^{-\pi u^2/Q^2} \cdot e\left(\frac{u(\alpha x_2-\beta y_2)}{u_2}\right) = & \sum\limits_{h=1}^{u_2} e\left(\frac{h(\alpha x_2-\beta y_2)}{u_2}\right)
\cdot \sum\limits_{\substack{u\in \mathbb{Z}\\ u\equiv h \bmod{u_2}}} e^{-\pi u^2/Q^2}\\
= & \frac{Q}{u_2} \cdot \sum\limits_{\gamma\in \mathbb{Z}} e^{-\pi \gamma^2(Q/u_2)^2} \cdot \sum\limits_{h=1}^{u_2} e\left(\frac{h(\alpha x_2-\beta y_2-\gamma)}{u_2}\right)\\
= & Q \cdot \sum\limits_{\substack{\gamma\in \mathbb{Z}\\ \gamma\equiv \alpha x_2-\beta y_2 \bmod{u_2}}} e^{-\pi \gamma^2(Q/u_2)^2}\\
\le & Q \cdot \sum\limits_{\substack{\gamma\in \mathbb{Z}\\ \gamma\equiv \alpha x_2-\beta y_2 \bmod{u_2}}} e^{-\pi \gamma^2} 
\end{split}
\end{equation*}
if $Q/\sqrt{2}<u_2\le Q$. 
Therefore, 
\begin{equation}  \label{z1}
\Sigma\left(Q^2,N;\mathbb{N}\right) \ll Q^3Z+Q^3Z\cdot \max\limits_{\substack{u_2\in \mathbb{Z}\\ Q/\sqrt{2}<u_2\le Q\\ x_2+y_2i \bmod u_2\\ (x_2+y_2i,u_2)=1}}
\sum\limits_{\gamma\in \mathbb{Z}} e^{-\pi \gamma^2}\cdot \mathop{\sum\limits_{\alpha\in \mathbb{Z}} \sum\limits_{\beta\in \mathbb{Z}}}_{\alpha x_2 -\beta y_2 \equiv \gamma \bmod{u_2}} e^{-4\pi (\alpha^2 +\beta^2)Q^2/N}. 
\end{equation}

We now consider the truncated sum
\begin{equation*}  
T_{\gamma}\left(U,V\right) := 
\mathop{\sum\limits_{|\alpha|\le U} \sum\limits_{|\beta|\le V}}_{\alpha x_2 -\beta y_2 \equiv \gamma \bmod{u_2}} 1. 
\end{equation*}
Let $d=(x_2,u_2)$, $x_2':=x_2/d$ and $u_2'=u_2/d$. Let $\overline{x_2'}$ be a multiplicative inverse of $x_2'$ modulo $u_2'$, i.e. $\overline{x_2'}x_2' \equiv 1 \bmod{u_2'}$. Necessarily $(d,y_2)=1$ because otherwise $(x_2+iy_2,u_2)\not=1$. Let $\overline{y_2}$ be a multiplicative inverse of $y_2$ modulo $d$, i.e. $\overline{y_2}y_2\equiv 1\bmod{d}$. It follows that 
$$
\alpha x_2 -\beta y_2 \equiv \gamma \bmod{u_2} \quad \Longleftrightarrow\quad \left(\beta\equiv - \overline{y_2}\gamma\bmod{d} \quad \mbox{and} \quad \alpha \equiv \overline{x_2'} \cdot \frac{\beta y_2+\gamma}{d} \bmod{u_2'}\right). 
$$  
This implies
\begin{equation} \label{z2}
T_{\gamma}(U,V)
\ll \left(1+\frac{V}{d}\right)\cdot \left(1+\frac{U}{u_2'}\right)\ll 1+U+V+\frac{UV}{u_2}\ll 1+U+V+\frac{UV}{Q}
\end{equation}
if $Q/\sqrt{2}<u_2\le Q$. 
Using partial summation for the sums over $\alpha$ and $\beta$ on the right-hand side of \eqref{z1} together with \eqref{z2} now gives 
\begin{equation*}
\Sigma\left(Q^2,N;\mathbb{N}\right)\ll Q^3Z \cdot \sum\limits_{\gamma\in \mathbb{Z}} e^{-\pi \gamma^2}\cdot \left(1+\frac{N^{1/2}}{Q}+\frac{N}{Q^3}\right)\ll \left(Q^3+Q^2N^{1/2}+N\right)Z,
\end{equation*}
implying \eqref{iine} upon summing up the contributions of $O(\log 2Q)$ dyadic intervals containing the moduli norms $\mathcal{N}(q)$.

\section{Proof of Theorem 3}
Now we turn to the main point of this paper, a proof of Theorem 3. In the situation of this theorem, we have $\mathcal{S}=\{q\in \mathbb{Z}[i]\ :\ \mathcal{N}(q)=\Box\}$, and the contribution of $Q^2/2<\mathcal{N}(q)\le Q^2$ equals
\begin{equation} \label{with}
\sum\limits_{\substack{q\in \mathbb{Z}[i]\setminus\{0\}\\ Q^2/2<\mathcal{N}(q)\le Q^2\\ \mathcal{N}(q)=\Box}} \sum\limits_{\substack{r \bmod{q}\\ (r,q)=1}} \left|\sum\limits_{\substack{n\in \mathbb{Z}[i]\\ \mathcal{N}(n)\le N}}  a_n \cdot e\left(\mbox{\rm Tr}\left(\frac{nr}{2q}\right)\right)\right|^2 =  \Sigma\left(Q^2,N;\mathcal{S}\right).
\end{equation}

\subsection{Case of large $Q$} We first deal with the case when $Q> N^{1/2-\varepsilon}$. For individual moduli $q\in \mathbb{Z}[i]\setminus \{0\}$, we have 
\begin{equation} \label{individual}
\sum\limits_{\substack{r \bmod{q}\\ (r,q)=1}} \left|\sum\limits_{\substack{n\in \mathbb{Z}[i]\\ \mathcal{N}(n)\le N}}  a_n \cdot e\left(\mbox{\rm Tr}\left(\frac{nr}{2q}\right)\right)\right|^2 
\ll (\mathcal{N}(q)+N)Z,
\end{equation}
which can be proved in a way analogous to the corresponding bound 
$$
\sum\limits_{\substack{r \bmod{q}\\ r\in \mathbb{Z}\\ (r,q)=1}} \left|\sum\limits_{n\le N}  a_n \cdot e\left(\frac{nr}{q}\right)\right|^2 
\ll (q+N)Z
$$ 
in the setting of rational integers. Summing up trivially over $q$ and using $Q> N^{1/2-\varepsilon}$ now gives
\begin{equation} \label{ulm}
 \Sigma\left(Q^2,N;\mathcal{S}\right) \ll Q^{3}N^{\varepsilon}Z.
\end{equation}
 
\subsection{Case of small $Q$}
In the following, we assume that $Q\le N^{1/2-\varepsilon}$. We observe that $q\in \mathcal{S}$ if and only if  $\left(u,v,\sqrt{\mathcal{N}(q)}\right)$ is a Pythagorean triple. 
Therefore, one of the numbers $u$ and $v$ is odd, and the other one is even. Without loss of generality, we may assume that $u$ is odd and $v$ is even because the contribution of the modulus $iq=-v+iu$ is the same as that of $q$. The Pythagorean triples $\left(u,v,\sqrt{\mathcal{N}(q)}\right)$ with this property are parametrized by
$$
\left(u,v,\sqrt{\mathcal{N}(q)}\right)=\left(m^2-n^2,2mn,m^2+n^2\right), \quad (m,n)\in\mathbb{Z}^2. 
$$
Thus, on choosing $\Phi$ in such a way that 
$$
\Phi(z)=e^{-\sqrt{z}} \quad \mbox{ if } z\ge 0,
$$
Proposition \ref{Fou} gives
\begin{equation*} 
\begin{split}
 \Sigma\left(Q^2,N;\mathcal{S}\right) \ll & Q^3Z+
Q^2Z\times\\ & \max\limits_{\substack{\mathcal{N}(q_2)=\Box\\ Q^2/2<\mathcal{N}(q_2)\le Q^2\\ r_2 \bmod q_2\\ (r_2,q_2)=1}} \sum\limits_{\alpha} \sum\limits_{\beta} e^{-4\pi (\alpha^2 +\beta^2)Q^2/N} \cdot
\sum\limits_{m}\sum\limits_{n} e^{-\pi \left(m^2+n^2\right)/Q} \times\\ &  e\left(\frac{(m^2-n^2)(\alpha k+\beta l)+2mn(-\beta k+\alpha l)}{\mathcal{N}(q_2)}\right).
\end{split}
\end{equation*}
Applying the  Cauchy-Schwarz inequality, we deduce that  
\begin{equation} \label{5}
\begin{split}
\left|\Sigma\left(Q^2,N;\mathcal{S}\right)\right|^2 
\ll & Q^6Z^2+Q^4Z^2\times\\ & \max\limits_{\substack{\mathcal{N}(q_2)=\Box\\ Q^2/2<\mathcal{N}(q_2)\le Q^2\\ r_2 \bmod q_2\\ (r_2,q_2)=1}}  \left(\sum\limits_{\alpha} \sum\limits_{\beta} e^{-4\pi (\alpha^2 +\beta^2)Q^2/N}\right)  \left(\sum\limits_{\alpha} \sum\limits_{\beta} e^{-4\pi (\alpha^2 +\beta^2)Q^2/N}\times \right. \\ & \left.  
\left| 
\sum\limits_{m}\sum\limits_{n} e^{-\pi (m^2+n^2)/Q} \cdot e\left(\frac{(m^2-n^2)(\alpha k+\beta l)+2mn(-\beta k+\alpha l)}{\mathcal{N}(q_2)}\right)\right|^2\right).\\
\end{split}
\end{equation}

Clearly, 
\begin{equation} \label{6}
\sum\limits_{\alpha} \sum\limits_{\beta} e^{-4\pi (\alpha^2 +\beta^2)Q^2/N}\ll 1+\frac{N}{Q^2} \ll \frac{N}{Q^2}
\end{equation} 
and 
\begin{equation} \label{7}
\begin{split}
& \sum\limits_{\alpha} \sum\limits_{\beta} e^{-4\pi (\alpha^2 +\beta^2)Q^2/N}\times\\ & \left| 
\sum\limits_{m}\sum\limits_{n} e^{-\pi (m^2+n^2)/Q} \cdot e\left(\frac{(m^2-n^2)(\alpha k+\beta l)+2mn(-\beta k+\alpha l)}{\mathcal{N}(q_2)}\right)\right|^2\\
= & \sum\limits_{\alpha} \sum\limits_{\beta} e^{-4\pi (\alpha^2 +\beta^2)Q^2/N} \cdot
\sum\limits_{m_1}\sum\limits_{n_1}\sum\limits_{m_2}\sum\limits_{n_2} e^{-\pi (m_1^2+n_1^2+m_2^2+n_2^2)/Q} \times\\ &
 e\left(\frac{(m_1^2-m_2^2-n_1^2+n_2^2)(\alpha k+\beta l)+2(m_1n_1-m_2n_2)(-\beta k+\alpha l)}{\mathcal{N}(q_2)}\right).
\end{split}
\end{equation}
Setting 
$$
h_1:=m_1+m_2, \quad h_2:=m_1-m_2, \quad j_1:=n_1+n_2,\quad j_2:=n_1-n_2,
$$
the right-hand side of \eqref{7} turns into 
\begin{equation} \label{8}
\begin{split}
& \sum\limits_{\alpha} \sum\limits_{\beta} e^{-4\pi (\alpha^2 +\beta^2)Q^2/N} \cdot
\mathop{\sum\limits_{h_1}\sum\limits_{h_2}}_{h_1\equiv h_2\bmod{2}} \mathop{\sum\limits_{j_1}\sum\limits_{j_2}}_{j_1\equiv j_2\bmod{2}} 
e^{-\pi (h_1^2+j_1^2+h_2^2+j_2^2)/(2Q)} \times\\ &
 e\left(\frac{(h_1h_2-j_1j_2)(\alpha k+\beta l)+(h_1j_2+h_2j_1)(-\beta k+\alpha l)}{\mathcal{N}(q_2)}\right)\\
 = &  
\mathop{\sum\limits_{h_1}\sum\limits_{h_2}}_{h_1\equiv h_2\bmod{2}} \mathop{\sum\limits_{j_1}\sum\limits_{j_2}}_{j_1\equiv j_2\bmod{2}} 
e^{-\pi (h_1^2+j_1^2+h_2^2+j_2^2)/(2Q)} \cdot \sum\limits_{\alpha} \sum\limits_{\beta} e^{-4\pi (\alpha^2 +\beta^2)Q^2/N} \times\\ &
 e\left(\frac{\alpha\left((h_1h_2-j_1j_2)k+(h_1j_2+h_2j_1)l\right)+\beta\left(-(h_1j_2+h_2j_1)k+(h_1h_2-j_1j_2)l\right)}{\mathcal{N}(q_2)}\right).
 \end{split}
\end{equation}

Applying the Poisson summation for the sums over $\alpha$ and $\beta$ and then changing variables into $a=h_1+ij_1$ and $b=h_2+ij_2$, the right-hand side of \eqref{8} transforms into
\begin{equation} \label{9}
\begin{split}
& \frac{N}{4Q^2}\cdot \mathop{\sum\limits_{h_1}\sum\limits_{h_2}}_{h_1\equiv h_2\bmod{2}} \mathop{\sum\limits_{j_1}\sum\limits_{j_2}}_{j_1\equiv j_2\bmod{2}} 
e^{-\pi (h_1^2+j_1^2+h_2^2+j_2^2)/(2Q)} \cdot \sum\limits_{\gamma} \sum\limits_{\delta}  \\ &
 e^{-\pi \left(\left(((h_1h_2-j_1j_2)k+(h_1j_2+h_2j_1)l)/\mathcal{N}(q_2)-\gamma\right)^2+\left((-(h_1j_2+h_2j_1)k+(h_1h_2-j_1j_2)l)/\mathcal{N}(q_2)-\delta\right)^2\right)N/(4Q^2)}\\
 = & \frac{N}{4Q^2} \cdot \mathop{\sum\limits_{a\in \mathbb{Z}[i]}\sum\limits_{b\in \mathbb{Z}[i]}}_{a\equiv b\bmod{2}} 
e^{-\pi (\mathcal{N}(a)+\mathcal{N}(b))/(2Q)} \times\\ & \sum\limits_{\gamma} \sum\limits_{\delta}  
e^{-\pi \left(\left((\Re(ab)k+\Im(ab)l)/\mathcal{N}(q_2)-\gamma\right)^2+
 \left((-\Im(ab)k+\Re(ab)l)/\mathcal{N}(q_2)-\delta\right)^2\right)N/(4Q^2)}.
 \end{split}
\end{equation}
Setting $ab=q'=u'+v'i$ and re-arranging summations, the last line turns into  
\begin{equation} \label{10}
\begin{split}
 & \frac{N}{4Q^2} \cdot \sum\limits_{q'\in \mathbb{Z}[i]} \sum\limits_{\gamma} \sum\limits_{\delta}  
e^{-\pi \left(\left((u'k+v'l)/\mathcal{N}(q_2)-\gamma\right)^2+
 \left((-v'k+u'l)/\mathcal{N}(q_2)-\delta\right)^2\right)N/(4Q^2)}\times\\ & \mathop{\sum\limits_{a\in \mathbb{Z}[i]}\sum\limits_{b\in \mathbb{Z}[i]}}_{\substack{a\equiv b\bmod{2}\\ ab=q'}} 
e^{-\pi (\mathcal{N}(a)+\mathcal{N}(b))/(2Q)}\\ 
= & \frac{N}{4Q^2} \cdot \Bigg( \sum\limits_{q'\in \mathbb{Z}[i]\setminus\{0\}} \sum\limits_{\gamma} \sum\limits_{\delta}  
e^{-\pi \left(\left((u'k+v'l)/\mathcal{N}(q_2)-\gamma\right)^2+
 \left((-v'k+u'l)/\mathcal{N}(q_2)-\delta\right)^2\right)N/(4Q^2)}\times\\ & \mathop{\sum\limits_{a\in \mathbb{Z}[i]}\sum\limits_{b\in \mathbb{Z}[i]}}_{\substack{a\equiv b\bmod{2}\\ ab=q'}} 
e^{-\pi (\mathcal{N}(a)+\mathcal{N}(b))/(2Q)} +
\sum\limits_{\gamma} \sum\limits_{\delta}  e^{-\pi \left(\gamma^2+\delta^2\right)N/(4Q^2)} \cdot \Bigg(2 \sum\limits_{c\in \mathbb{Z}[i]} e^{-2\pi\mathcal{N}(c)/Q}-1\Bigg)\Bigg).
 \end{split}
\end{equation}

Clearly, 
\begin{equation} \label{11}
\sum\limits_{\gamma} \sum\limits_{\delta}  e^{-\pi \left(\gamma^2+\delta^2\right)N/(4Q^2)} \cdot \left(2 \sum\limits_{c\in \mathbb{Z}[i]} e^{-2\pi\mathcal{N}(c)/Q}-1\right)
\ll \left(1+\frac{Q^2}{N}\right) \cdot Q \ll Q,
\end{equation}
and the geometric-arithmetic mean inequality gives
\begin{equation} \label{12}
\begin{split}
& \mathop{\sum\limits_{a\in \mathbb{Z}[i]}\sum\limits_{b\in \mathbb{Z}[i]}}_{\substack{a\equiv b\bmod{2}\\ ab=q'}} 
e^{-\pi (\mathcal{N}(a)+\mathcal{N}(b))/(2Q)} \le \mathop{\sum\limits_{a\in \mathbb{Z}[i]}\sum\limits_{b\in \mathbb{Z}[i]}}_{\substack{a\equiv b\bmod{2}\\ ab=q'}} 
e^{-\pi \sqrt{\mathcal{N}(a)\mathcal{N}(b)}/Q}\\
= & e^{-\pi \sqrt{\mathcal{N}(q')}/Q} \cdot \mathop{\sum\limits_{a\in \mathbb{Z}[i]}\sum\limits_{b\in \mathbb{Z}[i]}}_{\substack{a\equiv b\bmod{2}\\ ab=q'}} 1 \ll 
\mathcal{N}(q')^{\varepsilon} \cdot e^{-\pi \sqrt{\mathcal{N}(q')}/Q},
\end{split}
\end{equation}
where for the last inequality, we have used the estimate
$$
\sum\limits_{\substack{d\in \mathbb{Z}[i]\\ d|z}} 1 \ll \mathcal{N}(z)^{\varepsilon}
$$
for the generalized divisor function in $\mathbb{Z}[i]$.

Combining \eqref{5}, \eqref{6}, \eqref{7}, \eqref{8}, \eqref{9}, \eqref{10}, \eqref{11} and  \eqref{12}, and taking the square root, we obtain
\begin{equation*} 
\begin{split}
 \Sigma\left(Q^2,N;\mathcal{S}\right) \ll &
\left(Q^3+Q^{1/2}N\right)Z+NZ \cdot \max\limits_{\substack{\mathcal{N}(q_2)=\Box\\ Q^2/2<\mathcal{N}(q_2)\le Q^2\\ r_2 \bmod q_2\\ (r_2,q_2)=1}}\left|\sum\limits_{q'\in \mathbb{Z}[i]\setminus\{0\}} \mathcal{N}(q')^{\varepsilon} \cdot e^{-\pi \sqrt{\mathcal{N}(q')}/Q}\times\right. \\ & \left. 
\sum\limits_{\gamma} \sum\limits_{\delta}  
e^{-\pi \left(\left((u'k+v'l)/\mathcal{N}(q_2)-\gamma\right)^2+
 \left((-v'k+u'l)/\mathcal{N}(q_2)-\delta\right)^2\right)N/(4Q^2)}\right|^{1/2}.
\end{split}
\end{equation*}
Now using $Q\le N^{1/2-\varepsilon}$ and the rapid decay of the function $e^{-x^2}$, we may cut summations at the cost of a small error, leading to
\begin{equation} \label{repl} 
\begin{split}
&  \Sigma\left(Q^2,N;\mathcal{S}\right) \ll
\left(Q^3+Q^{1/2}N\right)Z+NQ^{\varepsilon}Z \times\\ & \max\limits_{\substack{\mathcal{N}(q_2)=\Box\\ Q^2/2<\mathcal{N}(q_2)\le Q^2\\ r_2 \bmod q_2\\ (r_2,q_2)=1}} \sharp\left\{q'\in \mathbb{Z}[i] : \mathcal{N}(q')\le Q^2N^{\varepsilon}, \    
\binom{f\left((u'k+v'l)/\mathcal{N}(q_2)\right)}{f\left((-v'k+u'l)/\mathcal{N}(q_2)\right)}\in D_{R'}({\bf 0})\right\}^{1/2},
\end{split}
\end{equation}
where $f(x)$ is defined as in \eqref{fdef} and 
$$
R':=4Q^2N^{\varepsilon-1}.
$$

Using Proposition \ref{ballcount} with $Q$ replaced by $Q^2$ and $L=N^{\varepsilon}$, the maximum on the right-hand side of \eqref{repl} is bounded by 
$$
\ll \left(N^{\varepsilon}\left(1+\frac{R'}{1/Q^2}\right)\right)^{1/2} =N^{\varepsilon/2}\left(1+4Q^{4}N^{\varepsilon-1}\right)^{1/2}.
$$
Hence, we have 
\begin{equation} \label{ab} 
\Sigma\left(Q^2,N;\mathcal{S}\right) \ll (QN)^{\varepsilon}
\left(Q^3+Q^{1/2}N+Q^2N^{1/2}\right)Z.
\end{equation}
Taking \eqref{ulm} into consideration, we deduce that \eqref{ab} holds
for all $Q,N\ge 1$. This together with \eqref{with} gives Theorem \ref{theo5} upon summing up the contributions of $O(\log 2Q)$ dyadic intervals containing the moduli norms $\mathcal{N}(q)$.

\section{Open problems}
The following problems appear naturally in connection with this work. \\ \\
(i) Can these results be extended to general number fields?\\ \\
(ii) What can be proved for more general sets of moduli such as moduli whose norms are represented by polynomials?\\ \\
(iii) Is it possible to improve the above large sieve inequality for square norm moduli along similar lines as  in  \cite{BaZh}?

\end{document}